\newtheorem{theorem}{Theorem}[section]
\newtheorem{lemma}[theorem]{Lemma}
\newtheorem{definition}[theorem]{Definition}
\newtheorem{assumption}[theorem]{Assumption}
\def\RR{\mathbb{R}}
\def\Z{\mathbb{Z}}
\def\eps{\varepsilon}
\def\C{\mathcal{C}}
\def\K{\mathcal{K}}
\def\T{\mathcal{T}}
\def\L{\mathcal{L}}
\def\R{\mathcal{R}}
\title{Rigorous enclosures of a slow manifold}
\author{John Guckenheimer, Tomas Johnson, \& Philipp Meerkamp}
\address{Department of Mathematics, Cornell University, Ithaca, NY 14853, USA}
\email{jmg16@cornell.edu, tomas.johnson@cornell.edu, pmeerkamp@math.cornell.edu}
\keywords{Slow-fast system, invariant manifold, interval analysis}
\subjclass[2000]{34C45,34E13,34E15,37G25,37M99}
\date{2012-05-05}
\begin{document}

\begin{abstract}
Slow-fast dynamical systems have two time scales and an explicit parameter representing the ratio of these time scales. Locally invariant slow manifolds along which motion occurs on the slow time scale are a prominent feature of slow-fast systems. This paper introduces a rigorous numerical method to compute enclosures of the slow manifold of a slow-fast system with one fast and two slow variables. A triangulated first order approximation to the two dimensional invariant manifold is computed ``algebraically''. Two translations of the computed manifold in the fast direction that are transverse to the vector field are computed as the boundaries of an initial enclosure. The enclosures are refined to bring them closer to each other by moving vertices of the enclosure boundaries one at a time. As an application we use it to prove the existence of tangencies of invariant manifolds in the problem of singular Hopf bifurcation and to give bounds on the location of one such tangency.
\end{abstract}

\maketitle

\tableofcontents

\section{Introduction}\label{S_Intro}

Invariant manifolds and their intersections are important features that organize qualitative properties of dynamical systems. Three types of manifolds have been prominent in the subject: (1) compact invariant tori ~\cite{Dl01}, (2) stable and unstable manifolds of equilibria and periodic orbits~\cite{Kea05,EKO07}, and (3) slow manifolds of multiple time scale systems~\cite{J95}. Interval arithmetic and verified computing have been used extensively to give rigorous estimates and existence proofs for invariant tori and occasionally to locate stable and unstable manifolds, but this paper is the first to employ these methods to locate slow manifolds. Each of these three cases pose numerical challenges to locate the manifolds. 

Many methods that locate invariant tori assume that the flow on the tori is smoothly conjugate to a constant flow with dense orbits. Existence of this conjugacy confronts well known small divisor problems and the winding vector of the flow must satisfy diophantine conditions in order for this problem to be solvable. Typically, the numerical methods produce a Fourier expansion of the conjugacy which is determined up to a translation. The manifolds are located by projection onto a discrete set of Fourier modes and solving a fixed point equation for the coefficients of the conjugacy.

The computation of stable and unstable manifolds of equilibria and periodic orbits is a ``one-sided'' boundary value problem. The manifolds consist of trajectories that are asymptotic to the equilibrium or periodic orbit. In the case of an equilibrium point of an analytic vector field, the local stable and unstable manifolds are analytic graphs that have convergent asymptotic expansions whose coefficients can be determined iteratively. The most challenging aspect of computations of two dimensional manifolds arises from the way that trajectories do or do not spread out in the manifold as one departs from the equilibrium or periodic orbit. As illustrated by the Lorenz manifold~\cite{Kea05}, the manifolds can twist and fold in ways that present additional geometrical complications for numerical methods. The development of rigorous bounds for these invariant manifolds follows similar principles to the verified computation of individual trajectories.

Multiple time scale vector fields, also known as singularly perturbed differential equations, occur in many settings: systems of chemical reactions, lasers, fluid dynamics and models of the electrical activity of neurons are a few examples. Borrowing terminology from fluid dynamics, the solutions of these systems can have (boundary) layers in which the fast time scale determines the rate at which the solution varies as well as long periods of time during which the solution evolves on the slow time scale. The slow motion typically occurs along \textit{slow manifolds} that are locally invariant. The slow manifolds play a prominent role in qualitative analysis of the dynamics and bifurcations of multiple time scale systems. Indeed, model reduction procedures are frequently employed that replace a model by a lower dimensional model that approximates the motion along a slow manifold and ignores the fast dynamics of the original model. The ideal for this type of model reduction is an algorithm that computes the slow manifold exactly. That ideal seems very difficult to achieve and is not addressed in this paper. Instead, we seek rigorous bounds for the location of the slow manifold that are tight enough to give information that can be used in the analysis of bifurcations of the system. 

To explain the methods we introduce in the simplest terms, we focus upon \textit{slow-fast} systems that contain an explicit parameter $\eps$ that represents the ratio of time scales. Moreover, we restrict attention to systems that have two slow variables and one fast variable and use a single example as a test case. In principle, the methods generalize to the case of codimension one slow manifolds, and the definitions and existence proofs in Sections \ref{S_OverMethod} and \ref{S_Existence} have obvious higher dimensional analogues. In practice, however, due to the scarcity of tools for computational geometry in higher dimensions, implementing a higher dimensional version would be a significant extension of the work described in this paper. 
We comment on generalizations from the setting of systems with two slow and one fast variable in the discussion at the end of the paper, but leave consideration of further details to future work.

Slow manifolds of multiple time scale systems present unique theoretical and numerical challenges compared to the computation of invariant tori and (un)stable manifolds. The first of these challenges is that theory is developed primarily in terms of ``small enough'' values of the parameter $\eps$ measuring the time scale ratio of a slow-fast system. Numerically, one always works with specific values of $\eps$. The convergence of trajectories as $\eps \to 0$ is singular, making it difficult to develop methods framed in terms of asymptotic expansions in $\eps$. Divergent series are the rule rather than the exception in this context. The rich history of numerical integration methods for stiff systems and the large literature on reduction methods for kinetic equations of chemical systems reflect the difficulty of computing attracting slow manifolds, the simplest case for this problem. Computing slow manifolds of saddle-type presents the additional challenge that most nearby trajectories diverge from the slow manifold on the fast time scale in both forward and backward time. The second theoretical difficulty in finding slow manifolds is that they are only locally invariant in most problems of interest. The local invariance is accompanied by a lack of uniqueness: possible manifolds intersect fast subspaces in open sets whose diameter is exponentially small in $\eps$; i.e., bounded by $\exp(-c/\eps)$ for a suitable $c > 0$. Methods based upon root finding of a discretized set of equations must choose a specific solution of the discretized equations.

We compute enclosures of slow manifolds by exploiting transversality properties that improve as $\eps \to 0$ while being suitable for fixed values of $\eps$. The methods do not identify a unique object and are well suited to locating locally invariant slow manifolds. If $H$ is a hypersurface and $F$ is a vector field, then transversality of $F$ to $H$ is a \emph{local} property: verification does not rely upon computation of trajectories of $F$. For a slow-fast vector field with one fast variable, translation of a normally hyperbolic critical manifold along the fast direction produces a transverse hypersurface when the translation distance is large enough. Translation distances proportional to $\eps$ suffice. In this paper, we use piecewise linear surfaces $H$ as enclosing manifolds. For the example we consider, transversality at vertices of a face of $H$ implies transversality of the entire face. This reduces the computational complexity of checking transversality sufficiently that iterative refinement of the enclosures was feasible. 

Since slow manifolds are objects that are defined asymptotically in terms of the parameter $\eps$, they are not directly computable using finite information. One part of this paper is devoted to the development of a mathematical framework within which slow manifolds are defined for fixed values of $\eps > 0$. We define \emph{computable slow manifolds} and relate this concept to the slow manifolds studied in geometric singular perturbation theory. All computations and statements in this paper are for computable slow manifolds. This is similar in spirit to the finite resolution dynamics approach of Luzzatto and Pilarczyk \cite{LP11}. 

Our work is motivated by the study of tangencies of invariant manifolds. Significant global changes in the dynamics of a system have been observed to occur at bifurcations involving tangencies. Proving the existence of tangencies is intrinsically complicated because the manifolds themselves must be tracked over a range of parameters. Computer-aided proofs of tangencies of invariant manifolds have previously been studied by Arai and Mischaikow in \cite{AM06}, and Wilczak and Zgliczy\'nski in \cite{WZ09}. In Section \ref{S_Tang}, we prove that a tangency bifurcation involving a computable slow manifold occurs in the singular Hopf normal form introduced in \cite{G08}.   

\subsection{Slow-fast systems}\label{SS_SlowFast}
Slow-fast differential equations have the form:
\begin{eqnarray}\label{eq_slowFast}
\eps \dot x&=&f(x,y,\eps) \\
\nonumber \dot y & = & g(x,y,\eps),
\end{eqnarray}
where $x\in\RR^n$, $y\in\RR^m$, $f:\RR^{n+m+1} \rightarrow \RR^n$, and $g:\RR^{n+m+1} \rightarrow \RR^m$. We assume that the vector field $(f,g)$ is smooth ($C^\infty$), although most of this paper can easily be adapted to the finitely differentiable setting. Here $x$ and $y$ are the fast and slow variables, respectively. Throughout the  paper we consider the case $m=2$ and $n=1$ of two slow variables and one fast variable.

We define the \textit{critical manifold}, as the set
\begin{equation}\label{eq_critMfd}
S_0:=\{(x,y)\in \RR^{n+m} : f(x,y,0)=0\}.
\end{equation}
The critical manifold is normally hyperbolic at points where $D_xf$ is hyperbolic; i.e., has no eigenvalue whose real part is zero. Points where $S_0$ is singular are referred to as folds. On the normally hyperbolic pieces of the critical manifold, $x$ is given as a function of $y$, $x=h_0(y)$. The corresponding differential equation 
\begin{equation}\label{eq_slowSystem}
 \dot y = g(h_0(y),y,0)
\end{equation}
is called the slow flow. If one instead rescales time with $\eps$ and puts $\eps=0$ in \eqref{eq_slowFast}, one gets the \textit{layer equation}:
\begin{eqnarray}\label{eq_fastSystem}
x'&=&f(x,y,0) \\
\nonumber y' & = & 0,
\end{eqnarray}
Note that the manifold $S_0$ is exactly the set of critical points for the layer equation. Singular perturbation theory studies how the solutions to (\ref{eq_slowFast}) for $\eps$ small, but positive, can be understood by studying solutions to (\ref{eq_slowSystem}) and (\ref{eq_fastSystem}).

When $S_0$ is normally hyperbolic and $\eps>0$ is sufficiently small, geometric singular perturbation theory \cite{J95} ensures that the critical manifold perturbs to a \textit{slow manifold}. Slow manifolds are \textit{locally invariant} and $O(\eps)$ close to the critical manifold. However, slow manifolds are not unique, although different choices are within $O(e^{-c/\eps})$ distance from each other. We denote slow manifolds by $S_\eps$.The purpose of this work is to compute approximations of $S_\eps$ that are guaranteed to be of a certain accuracy. This is achieved by computing two approximations that enclose the slow manifold. The two approximations of the slow manifold are triangulated surfaces transverse to the vector field. To prove the transversality, we use interval analysis, to be explained in Subsection \ref{SS_ValNum}. Interval analysis is a general technique that enables mathematically rigorous proofs of inequalities on a digital computer. 

To simplify notation we denote the two slow variables by $y$ and $z$, i.e., from now on $y\in\RR$, and the vector field in the slow variables is denoted by $g=(g_y,g_z)$. We also assume that $f,g_y,$ and $g_z$ are independent of $\eps$. To summarize, the systems we study are of the following form:
\begin{eqnarray}\label{eq_slowFast_12}
\eps \dot x&=&f(x,y,z) \\
\nonumber \dot y & = & g_y(x,y,z) \\
\nonumber \dot z & = & g_z(x,y,z),
\end{eqnarray}
where $x,y,z\in\RR$, and $f,g_y,g_z:\RR^{3} \rightarrow \RR$. We will sometimes use the notation $F=(f,g_y,g_z)$.

\subsection{Validated numerics}\label{SS_ValNum}
\textit{Interval analysis} was introduced by Moore in \cite{Mo66} as a method to use a digital computer to produce mathematically rigorous results with only approximate arithmetic. Tucker \cite{T11} is a modern introduction to the subject, and more advanced topics are discussed by Neumaier \cite{Ne90}. The main idea is to replace floating point arithmetic with a set arithmetic; the basic objects are intervals of points rather than individual points. Together with directed rounding this method yields an enclosure arithmetic that allows for the rigorous verification of inequalities. To use interval analysis to produce a mathematical proof, often called \textit{(auto-)validated numerical methods}, one has to prove that the statement at hand can be reduced to a finite number of inequalities, and then verify that these inequalities are satisfied. Interval arithmetic is used for the verification. The objects used to describe sets in validated numerics are typically convex sets in some coordinate system, e.g., intervals, parallelograms, or ellipsoids. In this paper we will employ triangular meshes of surfaces, an approach that previously, in this setting, only has been used in \cite{JT11c}.

\subsection{Computation of invariant manifolds}\label{SS_InvMfd}
The study of invariant manifolds \cite{HPS77} is central to the theory of dynamical systems. The behavior of a system can often be understood by understanding its invariant structures. Numerical computations of invariant manifolds \cite{CFL05,C09,CZ11,EKO07,GK09,JT11a,Kea05,O95,S90,Z09} are important in many applications. There are no universally applicable methods to compute invariant manifolds; to be efficient, they have to be tailored for the specific class of problems one is studying. Computing invariant manifolds of slow-fast systems is particularly challenging. Two existing methods are \cite{EKO07,GK09}, and no rigorous methods exist. The main idea of our method is to refine a first order approximation of the manifold by local modifications that maintain transversality of the enclosing manifolds. Interval arithmetic is used to make the local computation of transversality rigorous. This is similar in spirit to the methods developed in \cite{G95} to study the phase portraits of planar polynomial vector fields. Even in the planar case the verified computation of phase portraits is a challenging task, and the few methods that exist include \cite{G95,JT11b}. 


\section{Overview of the method}\label{S_OverMethod}
This section describes our method to compute enclosures of the slow manifold of a slow-fast system of the form (\ref{eq_slowFast_12}). We start by giving an overview of the main ideas of the method. There are five  main steps in the algorithm: 
\begin{enumerate}
 \item 
triangulation of the critical manifold,
\item
computing the $O(\eps)$ correction term for the slow manifold,
\item
constructing left and right perturbations of the slow manifold,
\item
proving that the left and right perturbations enclose the manifold, and
\item
tightening the enclosure by contracting the left and right perturbations towards each other.
\end{enumerate} 

The first step is to compute a triangulation of the critical manifold, which is adapted to its geometry. The manifold is defined implicitly by the condition $f(x,y,z)=0$. In the example we consider in  Section \ref{S_Method}, we solve this equation to obtain explicit expressions for the functions of the form $x = h_0(y,z)$ whose graphs lie in the critical manifold. Alternatively, one computes approximations to $h_0$ using, e.g., automatic differentiation and continuation procedures. There are many software packages to compute triangulations of surfaces; we use CGAL \cite{CGAL} via its matlab interface. When a part of the critical manifold is represented as the graph of a function $h_0$, its domain in the plane of the slow variables can be triangulated, and then this triangulation can be lifted to the graph, as illustrated in Figure \ref{F_triangulation}. So that the triangles in the lifted triangulation have similar diameters, we choose triangles in the plane of the slow variables to have diameters that depend upon the gradient of $h_0$. We stress that the rest of the algorithm is independent from how the triangulation of the critical manifold is constructed. Rather than using axis parallel patches, one could, e.g., use approximate trajectory segments of the reduced system to determine the piece of the domain of the slow variables, where the slow manifold is computed. 

\begin{figure}[h]
\begin{center}
(a)\includegraphics[width=0.45 \textwidth]{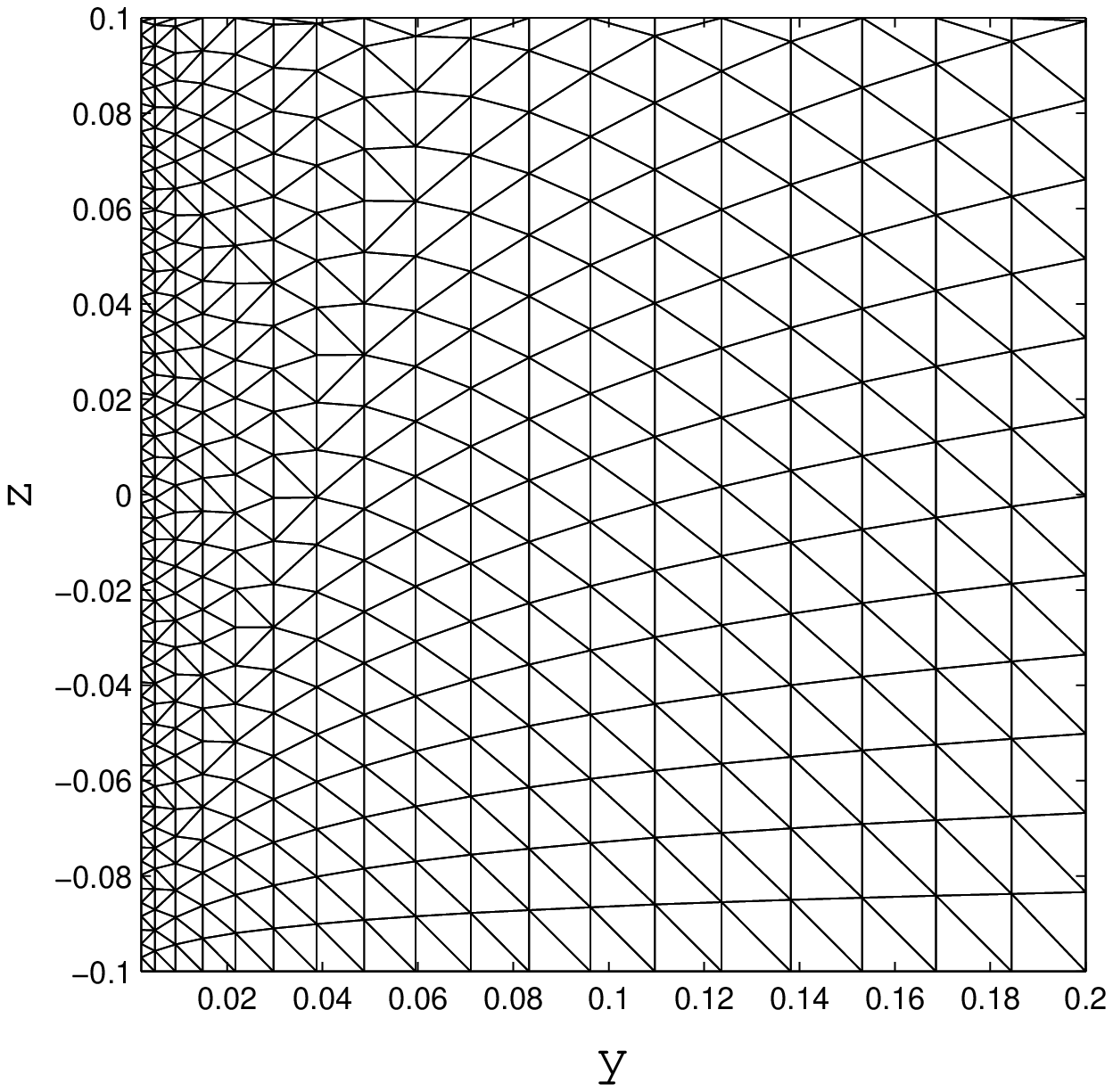}
(b)\includegraphics[width=0.45 \textwidth]{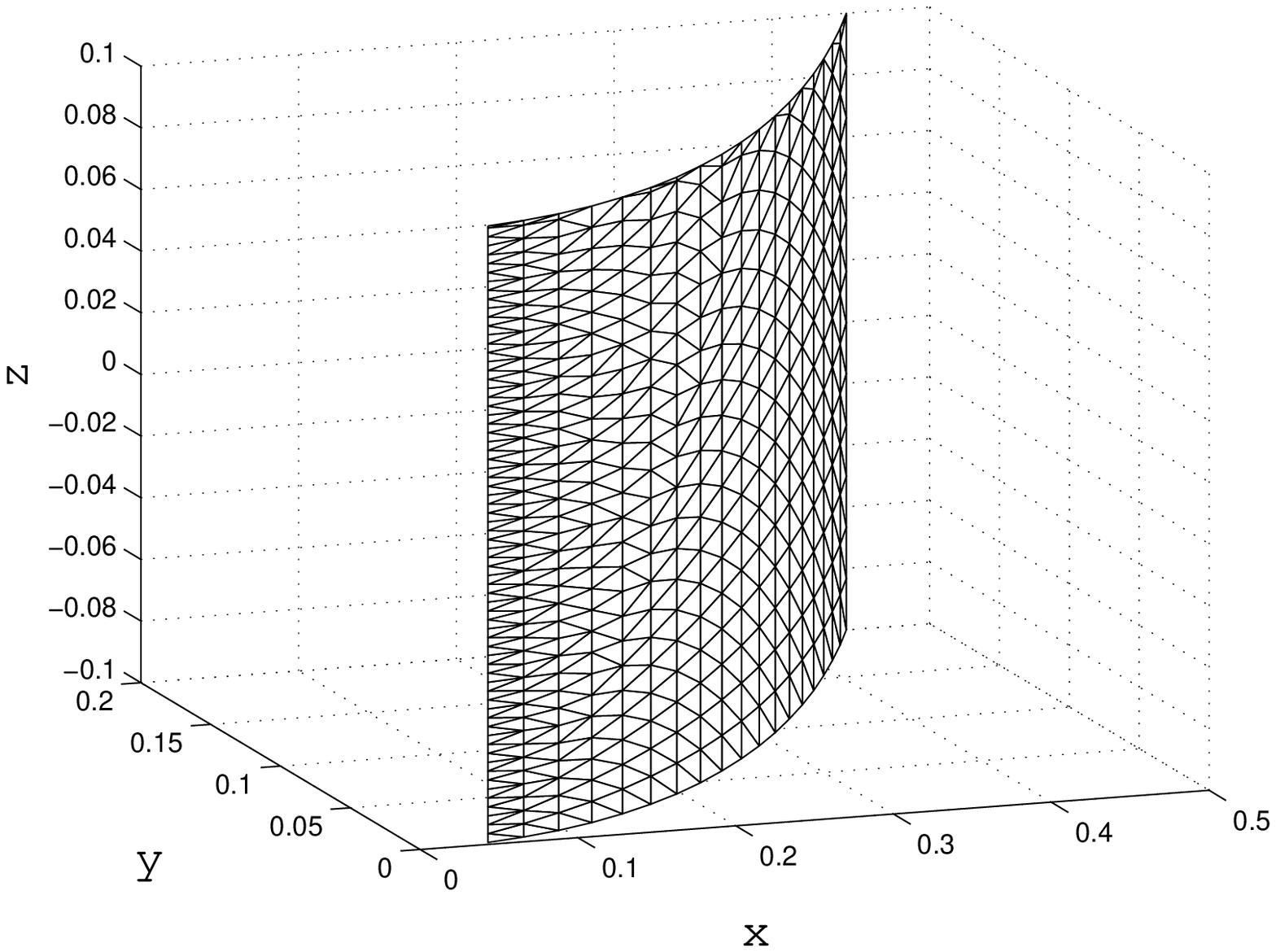}
\caption{The mesh generated for the example in Section \ref{S_Method}. There is a fold at $\{y=0\}$. (a) The Delaunay triangulation of the $(y,z)$ plane that is generated by the geometry adapted mesh points. (b) The lift of the triangulation to the critical manifold.}\label{F_triangulation}
\end{center}
\end{figure}

We compute an approximation to the slow manifold using a procedure similar to that employed in stiff integrators that use Rosenbrock methods \cite{HW}. The tangent space to the critical manifold is orthogonal to the vector $df$. According to the Fenichel theory, the slow manifold is $O(\eps)$ close to the critical manifold in the $C^1$ topology, so its tangent space is approximately normal to $df$. At a point $(x,y,z)$ in the (lifted) triangulation of $S_0$, we look for a nearby point $(x+\delta,y,z)$ at which the vector field is orthogonal to $df(x,y,z)$. Since $f(x,y,z) = 0$ and the normal hyperbolicity implies that $\partial_x f \ne 0$, 
$$\delta = -\eps \frac{(\partial_y f g_y + \partial_z f g_z)}{(\partial_x f)^2}$$ 
is an approximate solution to this equation. 
Setting $\delta$ to this value, we take $(x+\delta,y,z)$ as a point of the triangulation of the approximate slow manifold.
The critical manifold and the approximation to the slow manifold are illustrated in Figure \ref{F_critSlowMfd} (a) and (b), respectively. We next perturb this triangulation of the approximate slow manifold in both directions parallel to the $x$-axis, as in Figure \ref{F_critSlowMfd} (c), by a factor $2^{j-6}\delta$, where $j$ is a natural number that will be specified later. In case that $\delta$ is very small, we replace it by a $O(\eps^2)$ term. This procedure yields two surfaces that are candidates for the enclosing surfaces that we seek. 

\begin{figure}[h]
\begin{center}
(a)\includegraphics[width=0.28 \textwidth]{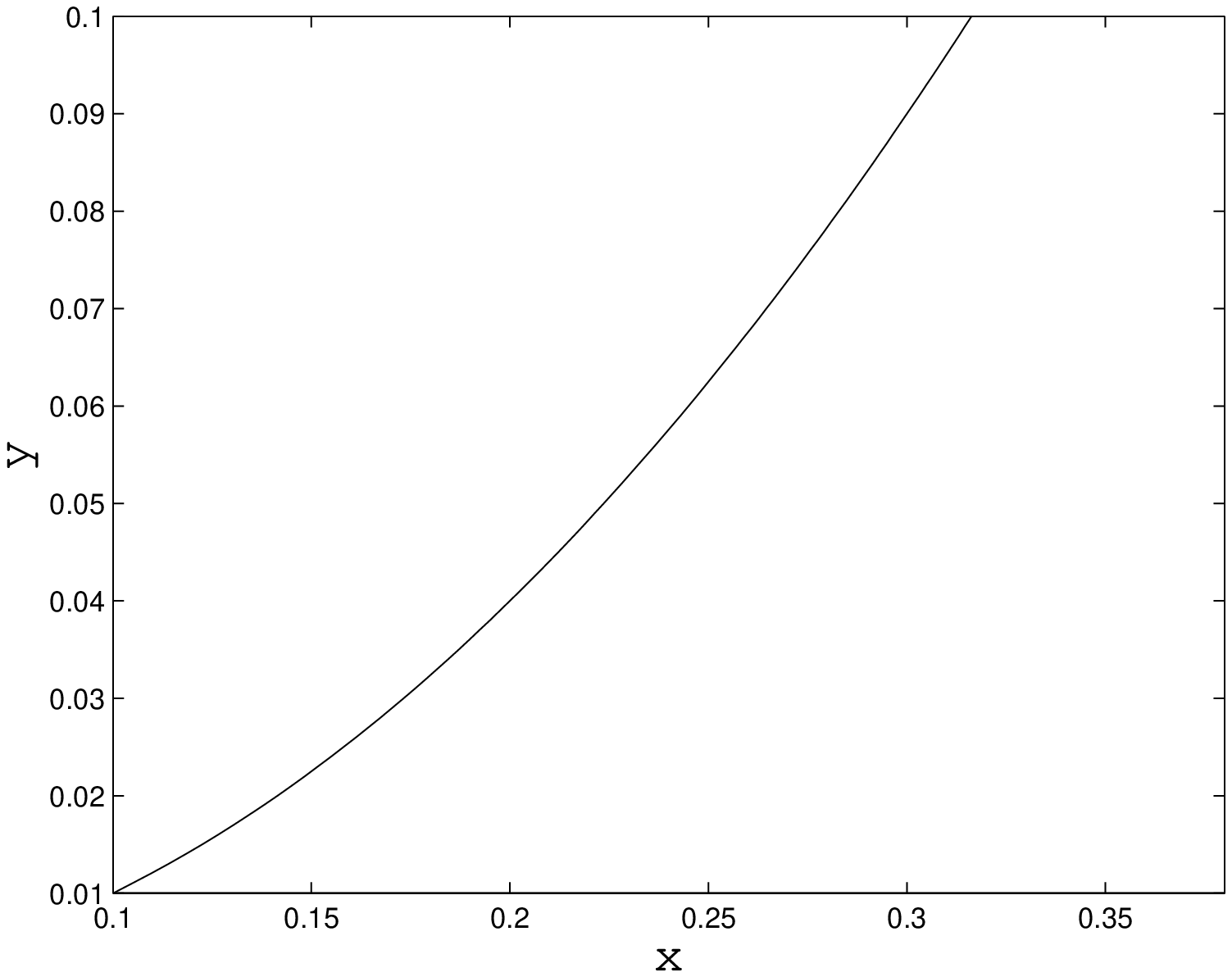}
(b)\includegraphics[width=0.28 \textwidth]{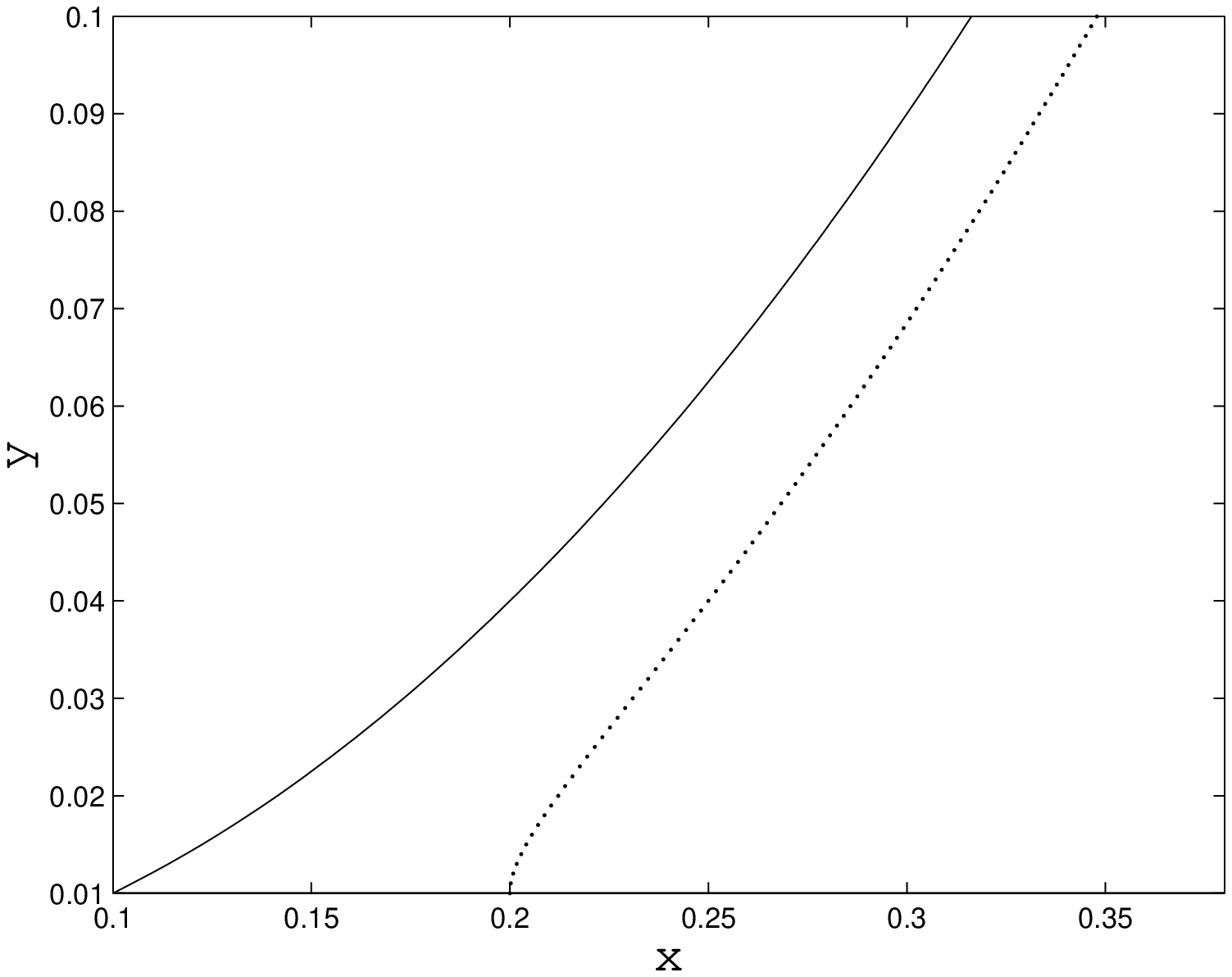}
(c)\includegraphics[width=0.28 \textwidth]{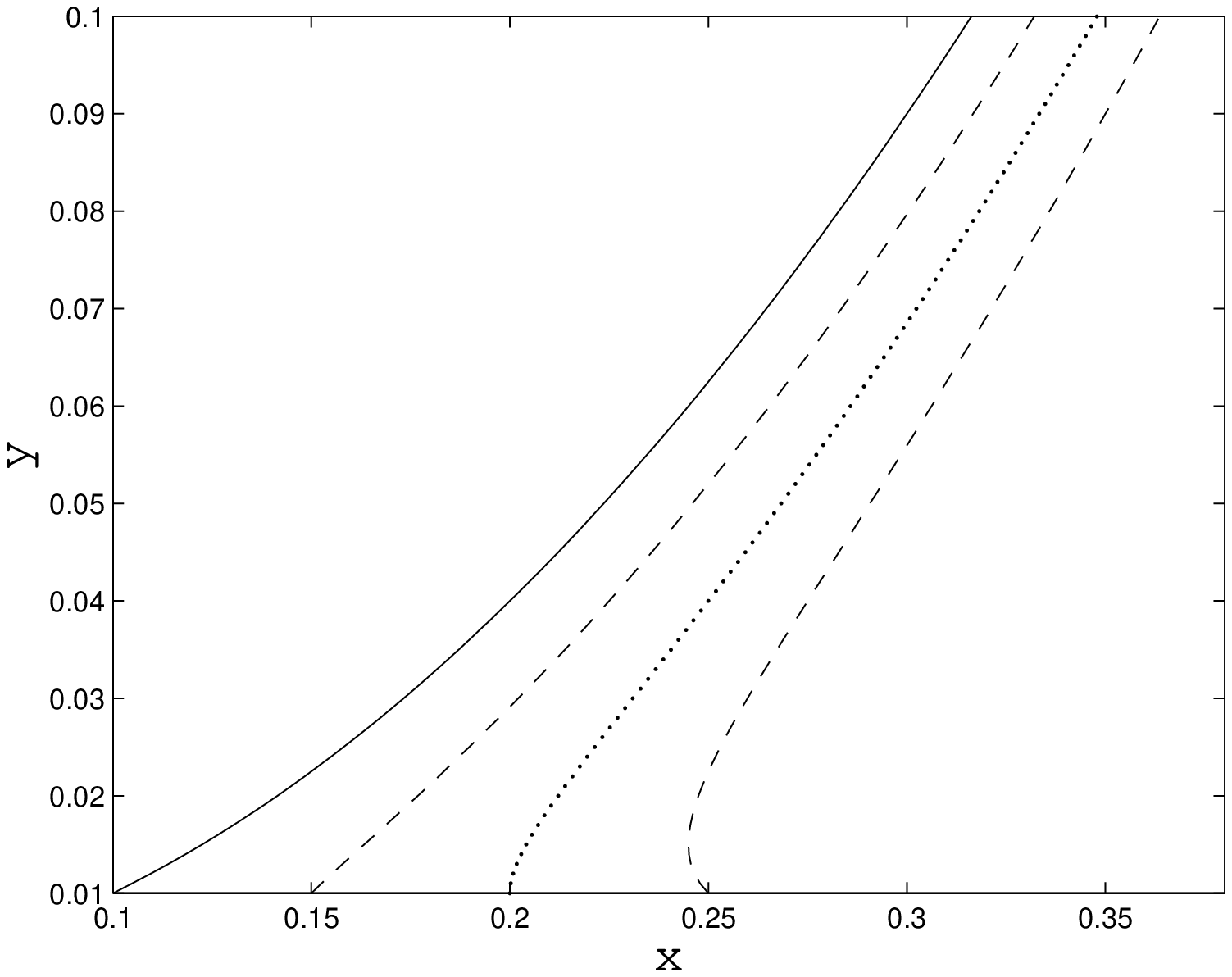}
\caption{Construction of the enclosing triangulations. This figure shows the projection on $(x,y)$ coordinates of: (a) the critical manifold (solid) (b) the critical manifold (solid), and the slow manifold (dotted) (c) the critical manifold (solid), the slow manifold (dotted), and the two enclosing surfaces (dashed).}\label{F_critSlowMfd}
\end{center}
\end{figure}

To verify that the surfaces enclose the slow manifold, we check whether the flow of the full system (\ref{eq_slowFast_12}) is transversal to the candidate surfaces. As the candidate surfaces are piecewise linear, we have to define what we mean by transversality at the edges and vertices of the triangulation. 

\begin{definition}\label{D_Cone}
Let $\T\subset \RR^3$ be a triangulated, piecewise linear two dimensional manifold $\T=\bigcup T_i$. Since $\T$ is a manifold, it locally separates $R^3$ into two sides. We say that a vector $v$ is transverse to $\T$ if $v$ and $-v$ point to opposite sides of $\T$. A smooth vector field is transverse to $\T$ if it is transverse to $\T$ at every point of $\T$. 
\end{definition}

Figure \ref{F_cone}(a) illustrates this definition. Trajectories of the flow generated by $F$ will all cross $\T$ from one side to another if $F$ is transverse to $\T$. If $\T_1$ and  $\T_2$ are triangulated surfaces transverse to the flow with opposite crossing directions, then they form enclosing surfaces for the slow manifold we seek. 

\begin{figure}[h]
\begin{center}
(a)\includegraphics[width=0.35 \textwidth]{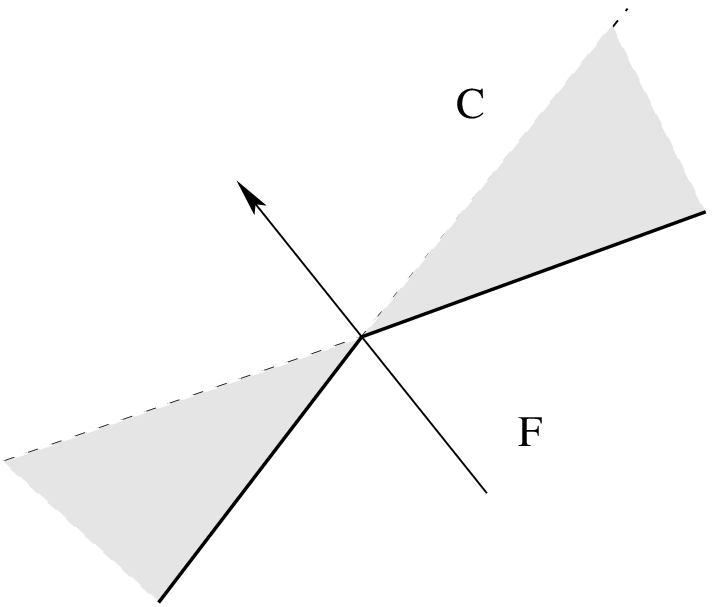}
\hspace{1.0cm}
(b)\includegraphics[width=0.2 \textwidth]{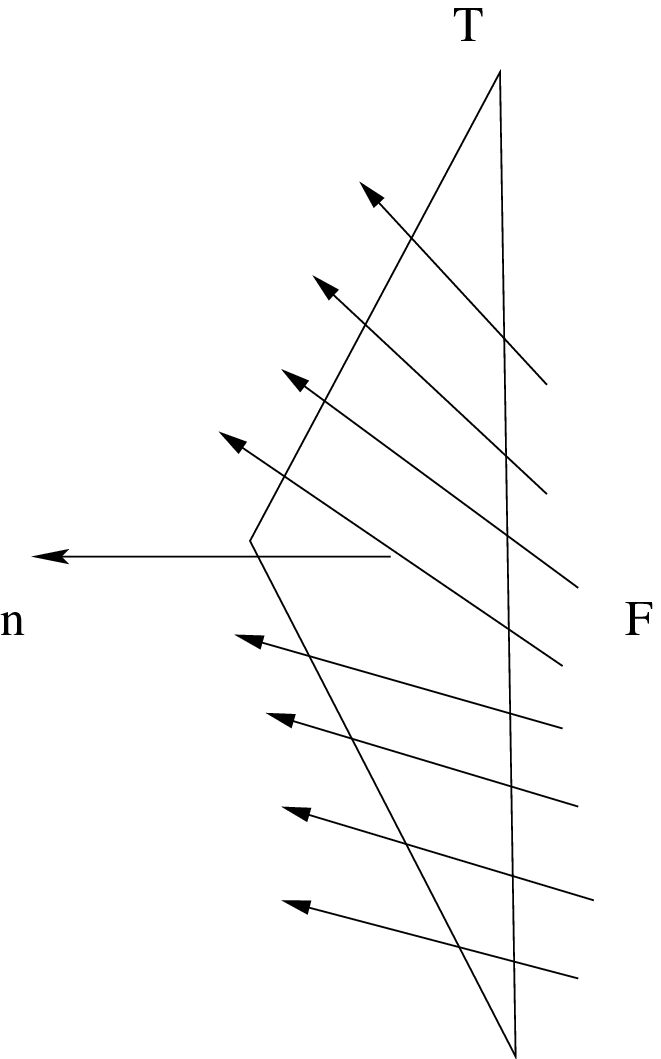}
\caption{(a) Transversality check on an edge of the triangulation. $F$ is transversal if it does not belong to the cone $\C$. (b) Transversality check on one face of the triangulation. To verify that the flow intersects the surface transversally, it suffices to prove that the vector field is never orthogonal to the normal of the surface, which a constant vector. So, we compute the range of $F\cdot n$, and prove that it does not contain $0$.}\label{F_cone}\label{F_triangle}

\end{center}
\end{figure}

%

Transversality is a condition that is local to each face of the triangulation, so we can check it on each face of the triangulation separately. To check the transversality condition on one face, we estimate the range of the inner product of the vector field with the normal of the face, as illustrated in Figure \ref{F_triangle}(b). Details about the existence of locally invariant, normally hyperbolic manifolds inside the enclosure are addressed in Section \ref{S_Existence} below.

%

The final part of the algorithm is to iteratively update the location of the vertices by moving them towards each other in small steps along the fast direction. We check that the transversality properties still hold, see Figure \ref{F_updateMfd}. This tightening step is stopped when no more vertices can be moved. Note that the vertices of all $4$ triangulations: the critical manifold, the approximate slow manifold, and the two perturbed manifolds, all have the same $(y,z)$ components.

\begin{figure}[h]
\begin{center}
(a)\includegraphics[width=0.28 \textwidth]{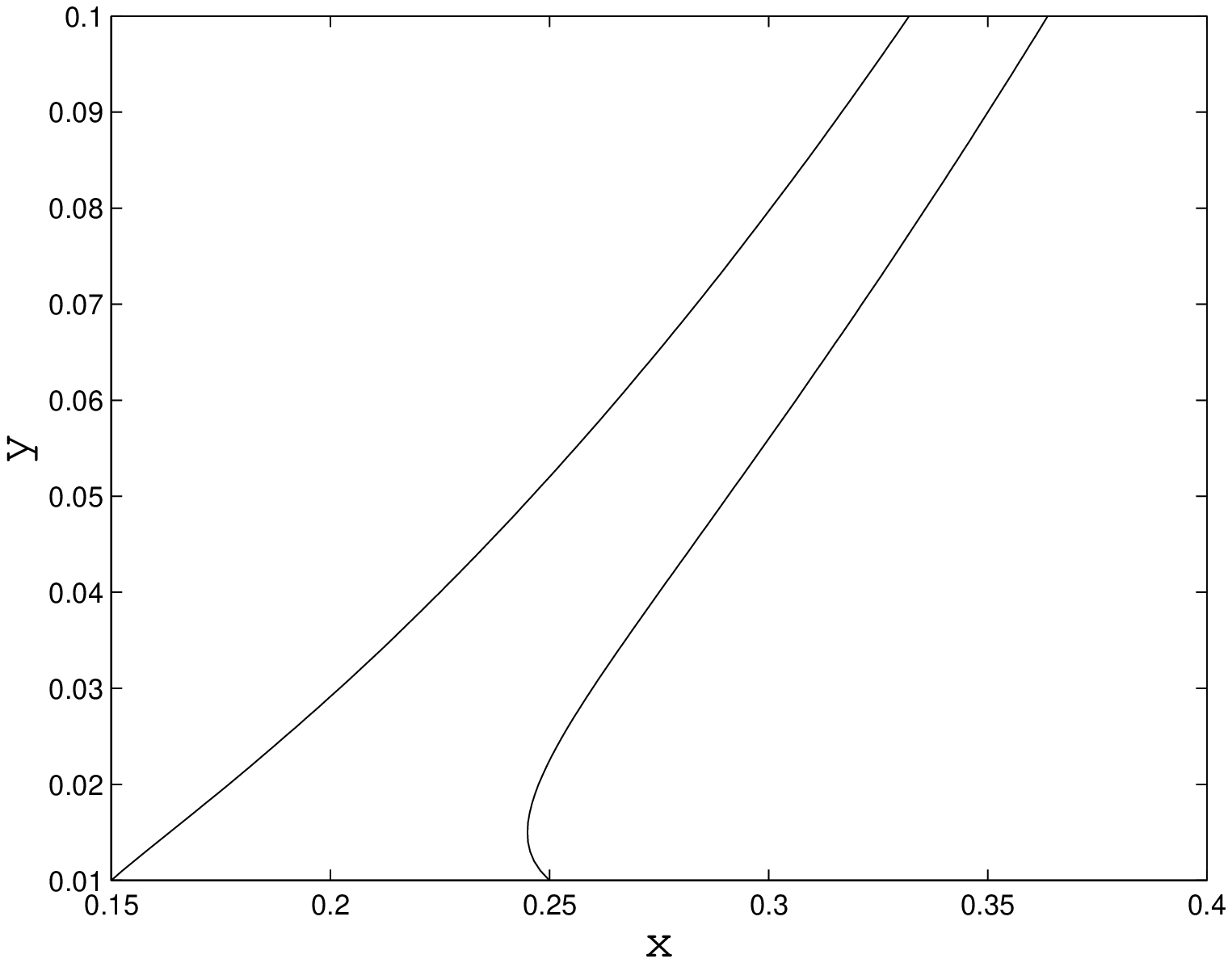}
(b)\includegraphics[width=0.28 \textwidth]{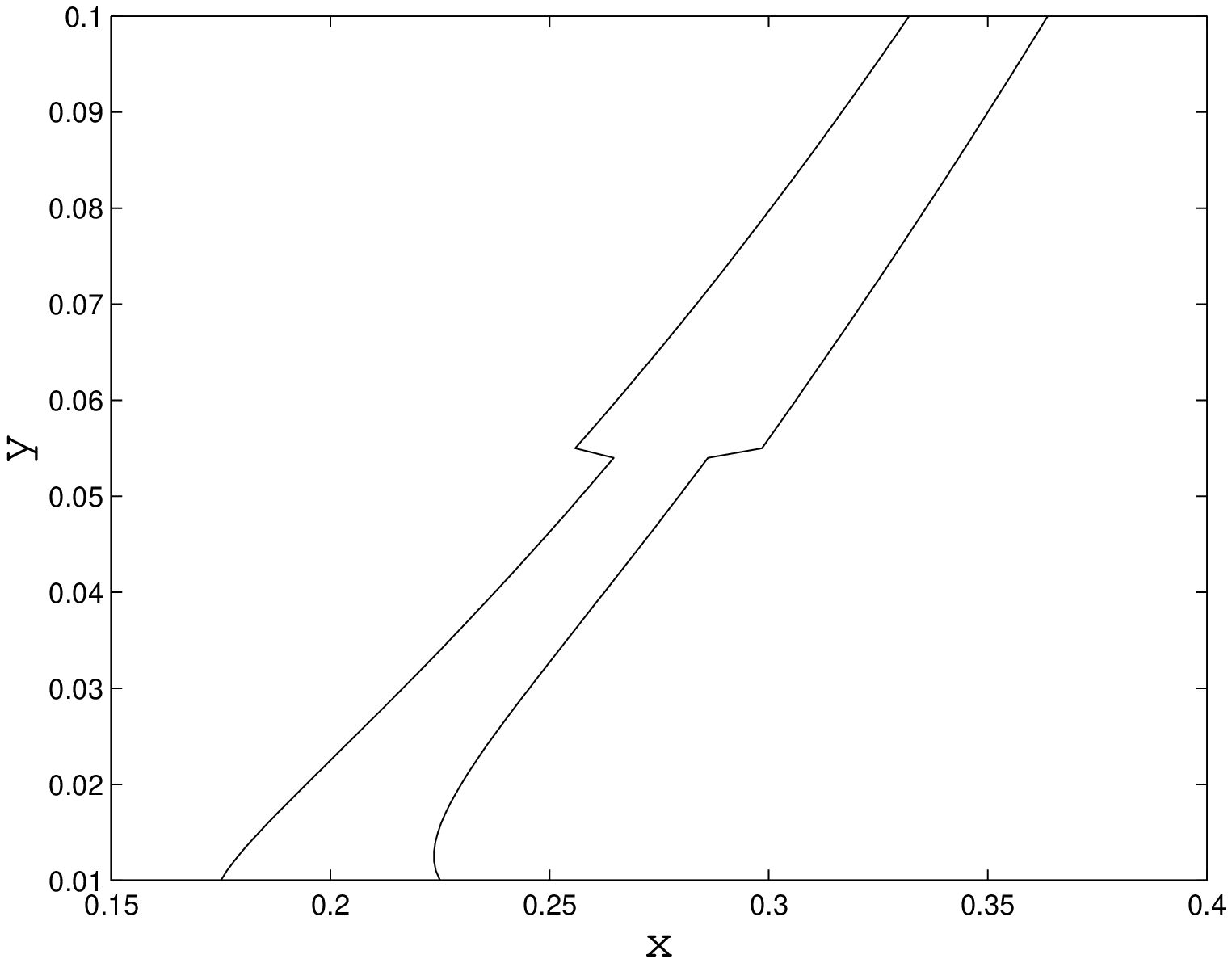}
(c)\includegraphics[width=0.28 \textwidth]{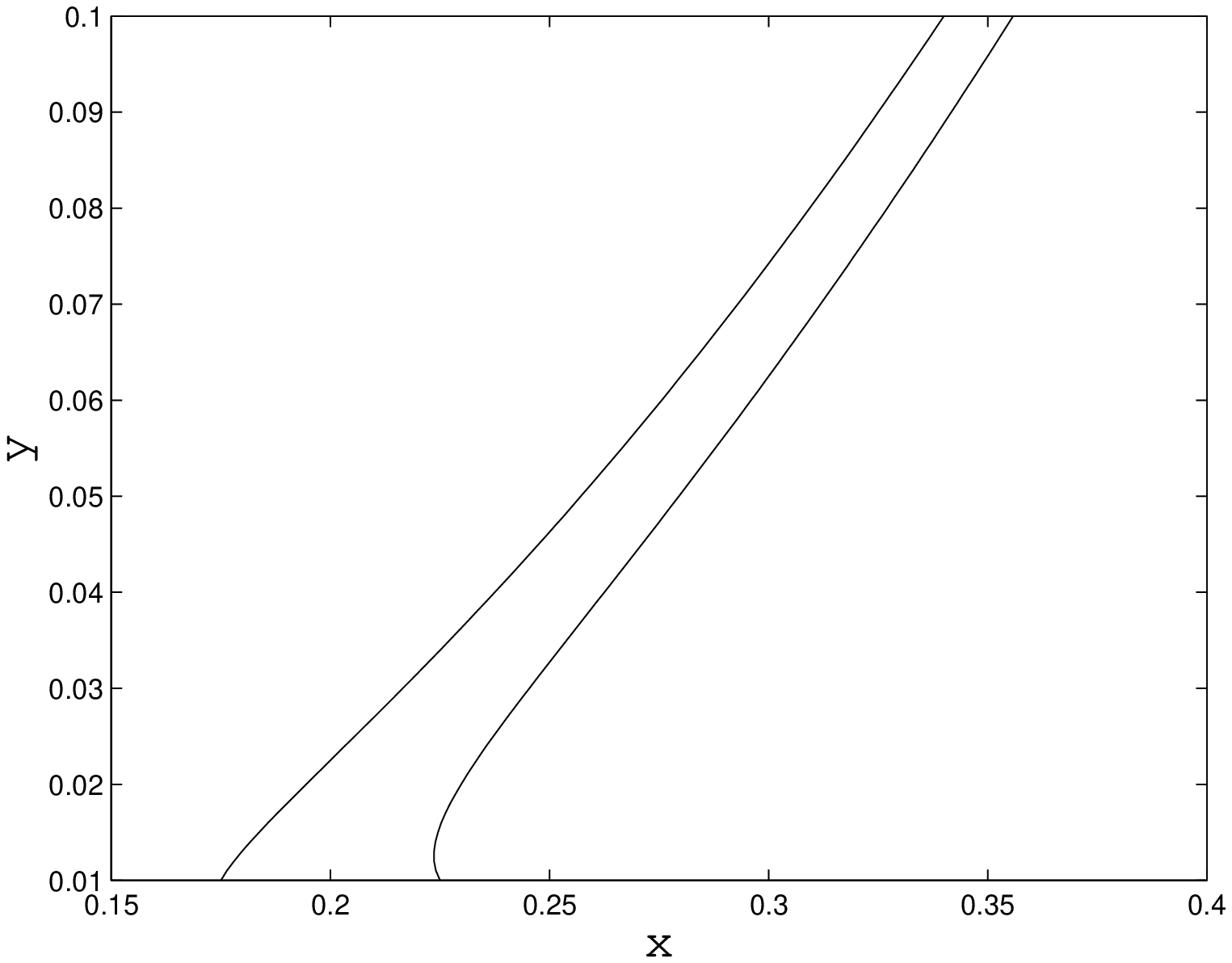}
\caption{Updating the enclosures of the slow manifold. This figure show the projection on $(x,y)$ coordinates of: (a) The initial enclosures. (b) The enclosures are updated vertex wise, here the first half of the vertices are updated. (c) The new enclosure.}\label{F_updateMfd}
\end{center}
\end{figure}

\section{Existence of locally invariant manifolds}\label{S_Existence}
The method outlined in the previous section constructs two triangulated surfaces, in the phase space of a slow-fast system, that are transversal to the flow for the given $\eps$. In this section we discuss the existence of locally invariant manifolds enclosed between these two triangulations. We denote the two enclosing surfaces by $\L$ and $\R$, and the region enclosed between them by $\C$. Note that $\L$ and $\R$ are graphs over the same compact region, so $\C$ is well defined. Specifically, if for some compact set of slow variables $D$, $\L=\{(x,y,z) \,: x=h_l(y,z), (y,z)\in D\}$, $\R=\{(x,y,z) \,: x=h_r(y,z), (y,z)\in D\}$, then  $\C=\{(x,y,z) \,: x\in[h_l(y,z),h_r(y,z)], (y,z)\in D\}$. We would like to claim that there is a locally invariant manifold inside of $\C$ which is a graph over the slow variables. We must thus verify that it is possible to choose a subset of $\C$ which is a $C^1$ manifold, locally invariant, and whose projection onto the domain in the slow variables is bijective. We start this section by defining \textit{computable slow manifolds} as objects associated to a fixed $\eps$. Similar to a slow-manifold, a computable slow manifold, is not unique. Informally, a computable slow manifold is a manifold close to the critical manifold where the flow is slow. We measure slowness by comparing the slopes of trajectories within our enclosure with the slope of the critical manifold. The relative slope is defined as a bound on the slope of trajectories divided by the slope of the critical manifold: 
\begin{equation}\label{eq_relSlopeDef}
s(\eps)=\max_{\C} \dfrac{\dfrac{|\dot x|}{|\dot y|+|\dot z|}}{\left(\left|\dfrac{\partial h_0(y,z)}{\partial y}\right|+\left|\dfrac{\partial h_0(y,z)}{\partial z}\right|\right)}.
\end{equation} 
\begin{definition}
A computable slow manifold is a $C^1$ locally invariant, normally hyperbolic manifold, of the same dimension as the critical manifold, which projects injectively along the fast variable to the critical manifold, such that the its relative slope satisfies $s(\eps)\leq \frac{1}{\sqrt{\eps}}.$ 
\end{definition}
Note that the order of $s(\eps)$ is $O\left(\frac{1}{\eps}\right)$ away from the slow manifold, so the definition is consistent with the standard perturbative definition of slow manifolds \cite{J95}. Slow manifolds are widely used in studies of slow-fast systems arising from biological or chemical models. However, computable slow manifolds are often the objects that are identified in applications: a locally invariant manifold at a \emph{fixed} value of epsilon that follows the critical manifold closely, and on which the flow is slow \cite{DKO08,GK09,I00}. This concept is captured by the definition of the computable slow manifold. Thus, our enclosures method gives a general and robust method to compute where candidates for such manifolds might lie in the phase space.

  
We will explain why computable slow manifolds exist within $\C$ in the following special case, which is sufficient for the purpose of this paper.
\begin{assumption}\label{A_Exist}
Assume that:
\begin{itemize}
 \item[i] All trajectories of $\C$ reach its boundary in forward and backward time.
 \item[ii] The boundary of $\C$, $\partial \C$ is piecewise smooth. Tangencies of the vector field with $\partial \C$ are quadratic (i.e., folds in the sense of singularity theory), and these tangencies occur along smooth curves that connect $\L$ and $\R$.
\item[iii] There are invariant horizontal and vertical cone fields on $\C$, and the vertical invariant cone field contains the fast direction of the vector field on $\C$. 
\end{itemize}
\end{assumption}

Assume that the vector field is inward on $\L$ and $\R$ and denote by $\C_{in}$ and $\C_{out}$ the sets in $\partial \C - \L - \R$ where the vector field points inward and outward, respectively. 
Choose a smooth curve, $x=r_0(y,z)$, in $\partial \C - \L - \R$ such that the projection of the curve to the slow variables contains the projection of $C_{in}$ to the slow variables, and points on the curve on $\C_{out}$ are images of the flow of points on the curve on $\C_{in}$. Flow this graph forward until each trajectory leaves $\C$. The set swept out by these trajectory segments is: 
\begin{equation}\label{eq_hatSeps}
S_\eps:=\{\phi_t(x,y,z) \,: x=r_0(y,z), \phi_t(x,y,z)\in \C\}. 
\end{equation}
The set $S_\eps$ is well-defined, as smooth as $r_0$ and the vector field, and diffeomorphic to its projection onto the critical manifold $S_0$. Inflowing trajectories of $\C$ must exit through $\C_{out}$. The exit time is uniformly bounded, since $\C$ is compact. Hence, $S_\eps$ is well defined. The existence of invariant cone fields, with a normal vertical cone field containing the fast direction, ensures that $S_\eps$ is a graph over the slow variables, and thus diffeomorphic to the corresponding part of the critical manifold. The final requirement of the definition - that the relative slope is small - yields a quantitative requirement on the tightness of $\L$ and $\R$. 
%

\section{Singular Hopf normal form}\label{S_SingHopfBif}

In a slow-fast system, an equilibrium point may cross a fold of the critical manifold. If it undergoes a Hopf bifurcation at $O(\eps)$ distance from the fold both in parameter and phase space, we follow \cite{G08} and refer to this as a \textit{singular Hopf bifurcation}. Singular Hopf bifurcation occurs in generic one parameter families of slow-fast systems. 

We use a normal form for singular Hopf bifurcation in systems with one fast and two slow variables proposed by Guckenheimer \cite{G08} as an example system for the computations of slow manifolds presented in this paper. The normal form is given by
\begin{equation}\label{eq_singularHopfNormalForm}
\begin{split}
\eps\,\dot{x}
& = 
(y-x^2)  \\
\dot{y} & = 
z-x \\
\dot{z}& =
-\mu-a x -b y -c z,\\
\end{split}
\end{equation}
which depends upon the four parameters $\mu, a, b, c$ as well as $\eps$. An $\eps$-dependent scaling transformation eliminates $\eps$ as a parameter: set 
\begin{equation}\label{eq_rescaling}
(X,Y,Z,T) = (\eps^{-1/2}x,\eps^{-1} y,\eps^{-1/2}z,\eps^{-1/2}t){\textrm{ and }}(A,B,C)=(\eps^{1/2}a,\eps b,\eps^{1/2}c) 
\end{equation}
to obtain
\begin{equation}\label{eq_resc_shnf}
\begin{split}
X'
& = 
Y-X^2 \\
Y' & = 
Z-X \\
Z'& =
-\mu-A X -B Y -C Z\\
\end{split}
\end{equation}

\begin{figure}[h]
\begin{center}
\includegraphics[width=0.8 \textwidth]{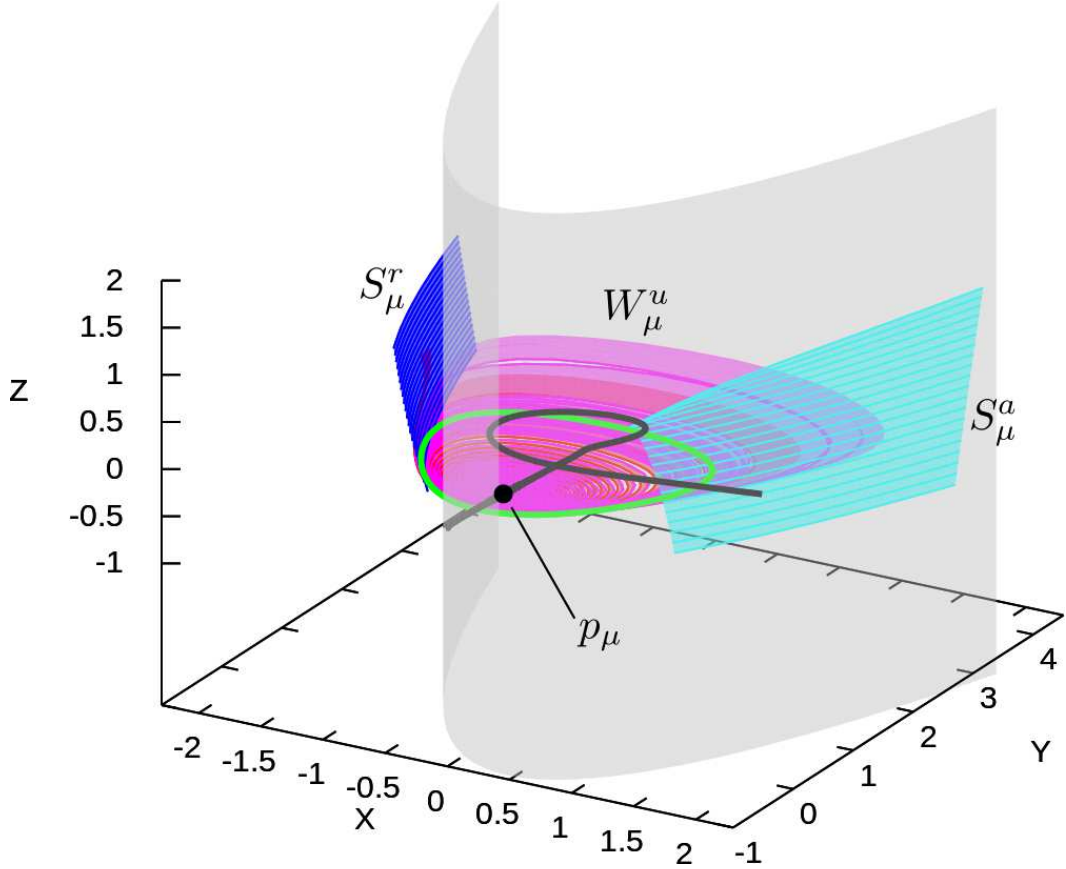}
\caption{Phase space of system ~\eqref{eq_resc_shnf} when $(\mu,A,B,C)=(0.0015709, -0.05, 0.001, 0.1)$. A repelling sheet of the slow manifold is plotted in dark blue, an attracting sheet in cyan. A selection of trajectories in the unstable manifold of the equilibrium near the origin are plotted in red,    a strip of unstable manifold that escapes from the fold region is shaded in magenta.}\label{F_shb_phase_space}
\end{center}
\end{figure}

Guckenheimer \cite{G08} studies invariant manifolds in the phase space of system ~\eqref{eq_resc_shnf} at different system parameters: 
The branch of the critical manifold $y=x^2$ where $x<0$ perturbs into a repelling slow manifold, while the branch where $x>0$ perturbs into an attracting slow manifold. 
In large regions of parameter space, an equilibrium that has undergone singular Hopf bifurcation is a saddle-focus with a two-dimensional unstable manifold that is initially bounded by a periodic orbit. As the parameter $\mu$ is varied, the unstable manifold grows and eventually intersects the repelling slow manifold, first tangentially and then transversally. 
In the following, we refer to such a tangential intersection of the equilibrium's unstable manifold with the repelling slow manifold as a \textit{tangency} or \textit{tangency of invariant manifolds}. Figure ~\ref{F_shb_phase_space} shows selected objects in the phase space of system ~\eqref{eq_resc_shnf} at a parameter just after the tangency. The tangency is a codimension 1 phenomenon. Note that since slow manifolds are not unique, there is some ambiguity in what it means for a slow manifold to intersect another manifold. We will introduce a definition to deal with this in section \ref{S_Tang}.

The tangency bifurcation is evident in the organization of phase space by invariant manifolds, as it  separates regions in parameter space where trajectories in the unstable manifold of the singular Hopf equilibrium have different possible limit sets: after the tangency, trajectories can escape from the fold region, whereas, before the tangency, the unstable manifold is confined to the fold region. This change is significant in many other slow-fast systems: suppose that a system with one fast and two slow variables has an S-shaped critical manifold as well as a singular Hopf bifurcation  followed by a tangency. The system introduced by Koper  \cite{Koper, KrupaPopovicKopell, Kuehn, DGKKO} is an example. Figure ~\ref{F_Koper_phasespace_timeseries} (left pane) shows a trajectory in Koper's system just after the tangency bifurcation, together with the position of the critical manifold: the trajectory starts in the vicinity of the singular Hopf equilibrium and goes through a spiraling motion until it leaves the fold region to the left. It lands close to the critical manifold on an attracting slow manifold, which it follows to a fold before ``jumping'' to another attracting slow manifold and returning along this slow manifold back to the vicinity of the singular Hopf equilibrium point. The process now repeats, leading to a time series like the one shown on the right pane of Figure ~\ref{F_Koper_phasespace_timeseries}. Such patterns with alternating large-amplitude and small-amplitude oscillations are known as mixed mode oscillations in the literature ~\cite{DGKKO}.

\begin{figure}[h]
\begin{center}
\includegraphics[width=1.0 \textwidth]{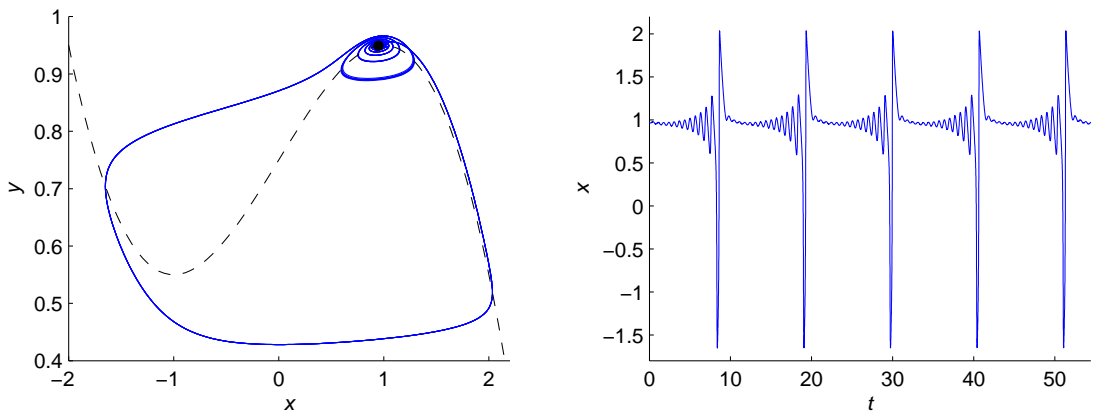}
\caption{Mixed mode oscillations in Koper's system with system parameters $\eps_1=0.1,\eps_2 = 1, k=-10, \lambda = -7.50$: the left panel shows a trajectory in the $xy$-plane, where $x$ is a fast variable while $y$ is slow.  The singular Hopf equilibrium is marked with a black dot, the critical manifold is drawn with a dashed black line. The right panel shows the time-series of the $x$-coordinate of the same trajectory. }\label{F_Koper_phasespace_timeseries}
\end{center}
\end{figure}

Guckenheimer and Meerkamp \cite{GM11} present a detailed analysis of the bifurcation structure of the singular Hopf normal form. This includes extensive numerical results on the position of the tangency bifurcation in the five-dimensional parameter space of the normal form. The position of the tangency curve was computed in two ways: (1) using the numerical continuation software AUTO \cite{AUTO} and (2) using custom MATLAB code. 
In method (1), a boundary value problem is set up to track a trajectory segment that starts on a fixed ray in the linear approximation of the unstable manifold of the singular Hopf equilibrium point and follows the repelling slow manifold for a substantial period of time. The latter is achieved by requiring it to have a sufficiently long time-length and ending on the parabola $y=x^2+5$, thus forcing the trajectory to remain very close to the repelling slow manifold nearly to the end of the trajectory segment. 
A tangency of invariant manifolds corresponds to a fold of a solution to this boundary value problem, i.e., a point where two solutions approach each other and vanish together as the parameter is varied. Such folds can be detected  by AUTO in a one-parameter continuation and continued in two parameters, thus enabling detection and continuation of the tangency bifurcation. Method (2) is based on the heuristic that the tangency separates regions in parameter space where trajectories in the unstable manifold escape the fold region from regions where trajectories in the unstable manifold remain in the fold region. A grid of initial conditions in an approximation  of a fundamental domain of the unstable manifold is integrated numerically for a sufficiently long time. If the sample trajectories in the unstable manifold limit to an attracting periodic orbit or another bounded attractor, the parameter is before the tangency. If at least one trajectory leaves the fold region, the parameter is after the tangency. Repeated applications of the above steps in an interval bisection method determine an approximate position of the tangency in parameter space. Note that neither method (1) nor (2) is rigorous. In particular, neither of the two methods establishes bounds for the position of the repelling slow manifold or of the unstable manifold of the equilibrium. Section ~\ref{S_Tang} presents a rigorous method to compute a tangency.


\section{Detailed description of the method for the singular Hopf normal form}\label{S_Method}
In this section we give a detailed description of our method for computing enclosures of slow manifolds, applying it to the system from Section \ref{S_SingHopfBif} as an example. Most of the details generalize to any system of the form (\ref{eq_slowFast_12}). In the description, we comment on nontrivial differences between the general case and the example at hand.

\subsection{Constructing the triangulation}\label{SS_Triangulation}
The first step of our algorithm is to triangulate a portion of the critical manifold $S_0$. On a normally hyperbolic piece of the critical manifold, $\partial_x f \neq 0$. The implicit function theorem implies there is locally a function $h_0(y,z)$, such that $f(h_0(y,z),y,z)=0$. In the singular Hopf normal form, $h_0$ is given explicitly as $h_0^\pm(y,z)=\pm \sqrt{y}$ with domain $D = [y_m,y_M]\times [z_m,z_M]\subset\RR^2$. For other systems, any 
suitable method for finding a sufficiently accurate approximation to $h_0(y,z)$ can be used.

To construct the vertices of a Delaunay triangulation of $S_0$, as shown in Figure \ref{F_triangulation}(a), we start with a 
triangulation of the domain of $h_0$, but want the diameter of the triangles on $S_0$ to be almost uniform. Setting $\kappa(y,z)=\|\nabla h\|$, $\tilde k=\sqrt{(y_M-y_m)^2+(z_M-z_m)^2}/d$, and
$$k(y,z):=\frac{\tilde k}{1+\kappa(y,z)},$$
with $d\in\Z_+$ to be chosen later, we select the following points in the $(y,z)$ plane as vertices of a triangulation: 
\begin{eqnarray}\label{eq_vertexPts}
 \nonumber(y_0,z_0) :=& (y_m,z_m) \\
 \nonumber(y_i,z_0) :=& (y_{i-1}+k(y_{i-1},z_0),z_0) ,\quad  & \textrm{if} \quad y_{i-1}<y_{i-1}+k(y_{i-1},z_0)<y_M, \\
 (y_i,z_0) :=& (y_M,z_m) ,\quad & \textrm{if} \quad y_{i-1}<y_M\leq y_{i-1}+k(y_{i-1},z_0), \\
 \nonumber(y_i,z_j) :=& (y_i,z_{j-1}+k(y_i,z_{j-1})) ,\quad  & \textrm{if} \quad z_{j-1}<z_{j-1}+k(y_i,z_{j-1})<z_M, \\
 \nonumber(y_i,z_j) :=& (y_i,z_M) ,\quad & \textrm{if} \quad z_{j-1}<z_M\leq z_{j-1}+k(y_i,z_{j-1}), \\
 & 0\le i \le I, \, 0\le j(i)\le J_i
\end{eqnarray}
Note that these points are aligned along lines parallel to the fold curve $x=y=0$ where $\partial_x f=0$. 

Let $\T$ denote the Delaunay triangulation generated by the set 
$$
\{(y_i,z_i) :  0\le i \le I, 0\le j(i)\le J_i \},
$$
 and $\K_0$ its lift to $S_0$, using the map $\pi_0^{-1}: (y,z) \mapsto (h_0(y,z),y,z)$.
Clearly $\pi_0^{-1}$ is a homeomorphism; i.e., the set of vertices, edges, and faces of $\K_0$, denoted by $V(\K_0)$, $E(\K_0)$, and $F(\K_0)$, are defined by $\pi_0^{-1}(V(\T))$, $\pi_0^{-1}(E(\T))$, and $\pi_0^{-1}(F(\T))$, respectively. $\T$ and $\K_0$ are shown in Figures \ref{F_triangulation}(a) and \ref{F_triangulation}(b), respectively.

\subsection{Constructing perturbed triangulations}
Our next step is to perturb $\K_0$, as illustrated in Figure \ref{F_critSlowMfd}, so that it lies closer to the slow manifold $S_\eps$
we are trying to enclose.
Fenichel theory, \cite{J95}, guarantees that for $\eps>0$ sufficiently small, $S_\eps$ is the graph of a function $h_\eps(y,z)$ with domain $D$ and $h_\eps(y,z) - h_0(y,z) = O(\eps)$. To compute triangulations, $\K_\eps$ that approximate $S_\eps$, we write $h_\eps$ in the form 
$$
h_\eps(y,z)=h_0(y,z)+\eps h_1(y,z).
$$
Substituting into the equation $\eps \dot x_\eps=f(h_\eps(y,z),y,z)$, we get that:
\begin{eqnarray}\label{eq_h1AssFor}
\nonumber f(h_0(y,z)+\eps h_1(y,z)+O(\eps^2),y,z)/\eps & = & \partial_y (h_0(y,z)+\eps h_1(y,z))\dot y+\partial_z(h_0(y,z)+\eps h_1(y,z))\dot z \\
\nonumber  & = & \partial_y h_0(y,z)\dot y+\partial_z h_0(y,z)\dot z+O(\eps) \\
\nonumber & = & \partial_y h_0(y,z)g_y(h_0(y,z),y,z) \\ 
& &+\partial_z h_0(y,z)g_z(h_0(y,z),y,z)+O(\eps)
\end{eqnarray}
To compute $\partial_y h_0$ and $\partial_z h_0$, we use that $f(h_0(y,z),y,z)=0$, and hence
$$
\partial_y h_0(y,z) = -\frac{\partial_y f(h_0(y,z),y,z)}{\partial_x f(h_0(y,z),y,z)},
$$
and
$$
\partial_z h_0(y,z) = -\frac{\partial_z f(h_0(y,z),y,z)}{\partial_x f(h_0(y,z),y,z)}.
$$
In addition, since $f(h_0(y,z),y,z)=0$, 
\begin{equation}\label{eq_fAssFor}
f(h_\eps(y,z),y,z)=\eps\partial_x f(h_0(y,z),y,z) h_1(y,z)+O(\eps^2).
\end{equation}
Thus, we can solve equation \eqref{eq_fAssFor} for $h_1(y,z)$, up to $O(\eps)$, and substitute for $f(h_\eps(y,z),y,z)$ using \eqref{eq_h1AssFor}, obtaining
$$
h_1(y,z) = -\frac{\partial_y f(h_0(y,z),y,z)g_y(h_0(y,z),y,z)+\partial_z f(h_0(y,z),y,z)g_z(h_0(y,z),y,z)}{\left(\partial_x f(h_0(y,z),y,z)\right)^2}+O(\eps),
$$
which in our case, considering $h^+(y,z)$ reads:
\begin{equation}\label{eq_h1Def}
h_1^+(y,z) = \frac{\sqrt{y}-z}{4y}.
\end{equation}
For $h^-(y,z)$, that we will use in Section \ref{S_Tang}, we get:
\begin{equation}\label{eq_h1DefM}
h_1^-(y,z) = \frac{-\sqrt{y}-z}{4y}.
\end{equation}

We put $\pi_\eps^{-1}: (y,z) \mapsto (h_0(y,z)+\eps h_1(y,z),y,z)$, and define:
\begin{equation*}
\K_\eps := \pi^{-1}_\eps \circ \pi_0(\K_0).
\end{equation*}
$\K_\eps$ is our approximation to the slow manifold, shown together with $S_0$ in Figure \ref{F_critSlowMfd}(b). Heuristically, it is $O(\eps^2)$ to $S_\eps$ at the vertex points. 

Let $\sigma_c$ denote the following map that moves points parallel to the $x$-axis:

\begin{equation}\label{eq_sigmaDef}
\sigma_c: (x,y,z) \mapsto  (x+c\max\left(|h_1(y,z)|,\frac{\epsilon^2}{|c|}\right),y,z). 
\end{equation}

We define our candidate enclosing surfaces as:
\begin{eqnarray}
 \L_{\eps, N} & := & \sigma_{-\eps/N}(\K_\eps) \\
 \R_{\eps, N} & := & \sigma_{\eps/N}(\K_\eps),
\end{eqnarray}
where $N\in\RR_+$. The initial choice for $N$ in our implementation was $N=64$, but we would have chosen a smaller $N$ if that had failed. The verification step of the algorithm includes a loop that divides $N$ by a factor $2$ upon failure and repeats the transversality test. Note that the region that is enclosed by $\L_{\eps, N}$ and $\R_{\eps, N}$ is disjoint from the critical manifold so long as $N>1$. The construction of $S_0$, $S_\eps$, $\L_{\eps, N}$ and $\R_{\eps, N}$ is shown in Figure \ref{F_critSlowMfd}.

\subsection{Verifying the enclosure property}
To prove that a slow manifold is located between $\L_{\eps,N}$ and $\R_{\eps,N}$, it suffices to prove that the vector field $(\ref{eq_slowFast_12})$ is transversal to each face of the triangulations, with opposite crossing directions for $\L_{\eps,N}$ and $\R_{\eps,N}$. For the remainder of this subsection, we restrict our attention to a single triangle. Local transversality, i.e., the verification of transversality on each face in the triangulation implies global transversality of $\L_{\eps,N}$ and $\R_{\eps,N}$.

Let $T$ be one face in $\L_{\eps,N}$ or $\R_{\eps,N}$. We denote its vertices by $v_1$, $v_2$, and $v_3$ and its edges by $e_{12}$, $e_{13}$, and $e_{23}$ with the edge $e_{ij}$ between the vertices $v_i$ and $v_j$. To verify that the vector field is transverse, it suffices to prove that the inner product between the normal of the face and the vector field is non-zero. Note that in contrast to most work on slow-fast systems, this condition, which is the main condition checked by our algorithm, becomes \textit{easier} to verify as $\eps\rightarrow 0$. The reason is that as $\eps\rightarrow 0$, the condition becomes essentially one-dimensional. We denote the normal to the face, normalized so that the first component is positive, by $n(T)$. This is possible because the first component is zero exactly at the folds, where the critical manifold fails to be normally hyperbolic. With this notation, the condition that we have to verify is 
\begin{equation}\label{eq_transCond}
 F(x,y,z)\cdot n(T) \neq 0, \quad \textrm{for all } (x,y,z)\in T.
\end{equation}

Condition (\ref{eq_transCond}) is equivalent to a verification that
\begin{equation}\label{eq_transCondFace}
F(\lambda_1 v_1+\lambda_2 v_2+\lambda_3 v_3)\cdot n(T) \neq 0 \quad \textrm{for all } \lambda_i\in[0,1], \lambda_1+\lambda_2+\lambda_3=1,
\end{equation}
which is an enclosure of the range of a function on a compact domain. This problem is the one we solve with interval analysis. Directly enclosing (\ref{eq_transCondFace}) using interval analysis in order to verify that the function is non-zero is, however, not optimal. The reason is that the problem is sufficiently sensitive that we would have to split the $\lambda_i$ domains into a very fine subdivision, and since this has to be done on each face, such a procedure would be prohibitively slow.

Our actual approach is based on monotonicity; first we prove that $F\cdot n$ is monotone on the face and on its restriction to the edges. Then we compute $F(v_i)\cdot n$ for the three vertices and verify that the interval hull of the results, i.e., the smallest representable interval containing the results, does not contain $0$. Note that this amounts to showing that the dot-product does not change sign on the face. We introduce
\begin{equation}\label{eq_SDef}
G := \nabla (F\cdot n).
\end{equation}
If $G\neq (0,0,0)$ on all of $T$ then $F\cdot n$ has no critical points inside of $T$ and we can restrict our attention to the edges, i.e. the boundary of $T$. Consider an edge $e_{ij}=\{(1-\lambda)v_i+\lambda v_j \,:\,\lambda\in[0,1] \}$, and denote its parametrization by $r(\lambda)$. The scalar product $F\cdot n$ is monotone on the edge if 
$$
0 \neq \frac{\partial}{\partial \lambda} (F(r(\lambda))\cdot n) =
G\cdot (v_j-v_i). 
$$  

Hence, we arrive at the monotonicity requirements, which for the case at hand are much easier to verify than \eqref{eq_transCondFace}:
\begin{eqnarray}
(0,0,0) &\notin& G(T) \label{eq_transEq1}\\
0 &\notin& G(e_{12}) \cdot (v_2-v_1) \label{eq_transEq2}\\
0 &\notin& G(e_{13}) \cdot (v_3-v_1) \label{eq_transEq3}\\
0 &\notin& G(e_{23}) \cdot (v_3-v_2) \label{eq_transEq4}
\end{eqnarray}

If the conditions (\ref{eq_transEq1}-\ref{eq_transEq4}) are satisfied we compute 
\begin{equation}\label{eq_vertexTrans}
F(v_1)\cdot n \sqcup F(v_2)\cdot n \sqcup F(v_3)\cdot n, 
\end{equation}
where $\sqcup$ denotes the interval hull. If (\ref{eq_vertexTrans}) does not contain zero, then the vector field is transversal to the face $T$. If (\ref{eq_transEq1}) holds but one or more of (\ref{eq_transEq2}-\ref{eq_transEq4}) do not hold, then we add the appropriate $F(e_{ij})\cdot n$ terms to (\ref{eq_vertexTrans}).

\subsection{Improving the bounds}\label{SS_ImpBound}
If the previous steps of the algorithm are successful, they yield two surfaces $\L_{\eps,N}$ and $\R_{\eps,N}$, that have been proven to enclose the part of the slow manifold that is above $[y_m,y_M]\times[z_m,z_M]$ in the $(y,z)$ plane. Since $N$ is fixed after the verification step we henceforth drop the indices on $\L$ and $\R$. Our aim is to produce enclosures that are as tight as possible, given the mesh size. We, therefore, try to improve the enclosure. The procedure is illustrated in Figure \ref{F_updateMfd}.

We do this by iteratively updating each of the vertices in the triangulation by moving them towards each other along the segment joining them. This segment is parallel to the x-axis due to our earlier constructions. The moves are done in two steps: (1) a tentative move is made of a vertex, and (2) the transversality conditions of all faces attached to this vertex are verified. When the transversality holds, the vertex is fixed at its new position and we proceed to the next vertex. The efficiency of this procedure will depend on several factors, primarily the ordering of the vertices and how much the vertices are moved. By moving a vertex only a fraction of what seems to be possible, the effect of the ordering of the vertices can be minimized. The penalty of smaller updates is that the procedure has to be run more times. Larger moves might be possible if an appropriate sorting algorithm were used, but we have not found an effective and efficient sorting criterion. Instead, we heuristically determine an update factor that optimizes the accuracy vs complexity. Given a right vertex, $v_R$, and a left vertex, $v_L$, such that $\pi_0(v_R)=\pi_0(v_L)$, we move each of them towards each other by an amount 
\begin{equation}\label{eq_update}
\frac{1}{8}\|(v_R-v_L)\|.
\end{equation}

We run the procedure to refine the enclosures of the slow-manifold several times, until no further improvement is possible. The quantity we use to measure the quality of the enclosures is the average distance between the two triangulations at the vertices. Let $\iota$ denote the number of vertices of the triangulations; by construction $\L$ and $\R$ have the same number of vertices, edges, and faces. The only difference between $\L$ and $\R$ is the values of the $x$-coordinates. We put
\begin{equation}\label{D_eta}
\eta(\L,\R)= \frac{1}{\sqrt{\iota}} \|v_{R}-v_{L}\|. 
\end{equation}
If the triangulation is fine enough $\eta$ will be $O(\eps^2)$. This fact is investigated numerically in Section \ref{S_NumRes}.

\subsection{Cone fields}
In order to ensure that there are manifolds inside of the set $\C$ enclosed by $\L$ and $\R$, we need to have invariant cone fields on $\C$, as introduced in Section \ref{S_Existence}. In this subsection we describe how such cone fields - one horizontal and one vertical - are constructed. Recall, see \cite{KH95}, that a standard horizontal or vertical cone for a phase space with variables $(x,y)$ is a set $\{\gamma\|x\|\geq \|y\|\}$ or $\{\gamma\|y\|\geq \|x\|\}$, respectively, and that a cone is the image of a standard cone under an invertible linear map. Equivalently, a cone is the set of points where a non-degenerate indefinite quadratic form is non-negative. Since horizontal and vertical cones are traditionally in the expanding and contracting directions, respectively, we will call the cone in the normal direction the vertical cone, and the cone in the direction of the slow manifold the horizontal cone. Also recall that a cone field is invariant if it is mapped into itself by the derivative of the dynamics, i.e., if the set where the quadratic form is non-negative is mapped by the derivative into the set where the quadratic form at the image point under the map is non-negative.

For the case at hand we will use $\gamma=1$ for both the horizontal and vertical cones in an appropriate coordinate system, such that the normal direction is in the vertical cone. A cone field is a map that associates a cone to each point of its domain. Given that \eqref{eq_resc_shnf} only has one nonlinear component, we will use constant cone fields. To prove that the cone fields are invariant, we solve the variational equation for the time $0.0004$ flow map, and use the eigendirections of the derivative of the flow as a basis, in which we represent the standard horizontal and vertical cones with $\gamma=1$. We verify that the vertical and horizontal cone fields are invariant, and that the vertical cone contains the fast direction, which ensures that $\hat S_\eps$ defined in \eqref{eq_hatSeps} projects injectively onto the slow variables, and, thus, is a graph over them. The flow time needs to be large enough for us to be able to prove the separation of the horizontal and vertical directions, but small enough that we do not move away too far in phase space. The value $0.0004$ turned out to be a good choice.

\subsection{Algorithms}\label{SS_Algorithm}
An implementation \cite{Progs} of the method described above has been made using the IntLab package \cite{IntLab} for interval arithmetic. A detailed description of the main algorithm is given as Algorithm \ref{mainAlgorithm}. The algorithm that checks if the vector field is transversal to a face is given as Algorithm \ref{transversalityAlgorithm}. Algorithm \ref{mainAlgorithm} takes a triangulation as input. That triangulation can be computed with any method, not necessarily the one outlined in Section \ref{SS_Triangulation}. In Algorithm \ref{transversalityAlgorithm} the function $sign(x)$ returns $0$ if $0\in x$. 


\begin{small}
\begin{algorithm}[ph]\label{mainAlgorithm}
 \KwData{$(f,g_y,g_z)$, $h_0$, $\T$, $\eps$}
\KwResult{$\L$, $\R$, $\eta$}
\ForAll{$(y,z)\in \T$}{$h_1(y,z) = -\frac{\partial_y f(h_0(y,z),y,z)g_y(h_0(y,z),y,z)+\partial_z f(h_0(y,z),y,z)g_z(h_0(y,z),y,z)}{\left(\partial_x f(h_0(y,z),y,z)\right)^2}$\;
}
$N=64$\;
transversal=false\;
$NF=\T.numberOfFaces$\;
\While{$\neg transversal$ \& $N>2^{-18}$}{
$x_{left}=h_0(y,z)+h_1(y,z)-\eps/N |h_1(y,z)|$\;
$x_{right}=h_0(y,z)+h_1(y,z)+\eps/N |h_1(y,z)|$\;
\eIf{$getTransversality(\T,x_{left})=-getTransversality(\T,x_{right})=NF$}
{transversal=true\;}
{$N=N/2$\;}
}
\If{$\neg transversal$} 
{exit(FAIL)\;}
$\eta=1$\;
$\eta_{new}=0$\;
\While{$\eta_{new}<\eta$}{
$\eta=\frac{\|x_{left}-x_{right}\|}{\sqrt{T.\iota}}$\;
$\tilde x_{left}=x_{left}$, $\tilde x_{right}=x_{right}$\;
\ForAll{$1\leq i\leq \iota$}{
$tri=\T.adjacentFaces(i)$\;
$\tilde x_{left}(i)=x_{left}(i)+0.125(x_{right}(i)-x_{left}(i))$\;
\eIf{$getTransversality(tri,\tilde x_{left},T.y,T.z)=-getTransversality(tri,x_{right},T.y,T.z)=tri.numberOfFaces$}{$x_{left}(i)=\tilde x_{left}(i)$\;}{$\tilde x_{left}(i)=x_{left}(i)$\;}
$\tilde x_{right}(i)=x_{right}(i)-0.125(x_{right}(i)-x_{left}(i))$\;
\eIf{$getTransversality(tri,x_{left},T.y,T.z)=-getTransversality(tri,\tilde x_{right},T.y,T.z)=tri.numberOfFaces$}{$x_{right}(i)=\tilde x_{right}(i)$\;}{$\tilde x_{right}(i)=x_{right}(i)$\;}
}
$\eta_{new}=\frac{\|x_{left}-x_{right}\|}{\sqrt{T.\iota}}$\;
}
$\L=Triangulate(\T.Triangulation,x_{left},\T.y,\T.z)$\;
$\R=Triangulate(\T.Triangulation,x_{right},\T.y,\T.z)$\;
\caption{Implementation of the main algorithm} 
\end{algorithm}
\end{small}

\begin{small}
\begin{algorithm}[h]\label{transversalityAlgorithm}
\KwData{$F=(f,g_y,g_z)$, $\T$(Triangulation,Vertices)}
\KwResult{$Intersections$}
$NF=\T.numberOfFaces$\;
$Intersections=0$\;
\ForAll{$1\leq i\leq NF$}{
$n=\T.Normal(i)$\;
$(v_{1},v_{2},v_{3})=\T.Vertices(i)$\;
$(e_{12},e_{13},e_{23})=\T.Edges(i)$\;
$G=\nabla(F(\T.Face(i))\cdot n)$\;
\eIf{$0\in G$}
{$Intersections+=sign(F(\T.Face(i))\cdot n)$\;}
{$G_{12}=\nabla(F(e_{12})\cdot n)\cdot e_{12}$,
$G_{13}=\nabla(F(e_{13})\cdot n)\cdot e_{13}$,
$G_{23}=\nabla(F(e_{23})\cdot n)\cdot e_{23}$\;
\eIf{$0\notin G_{12}G_{13}G_{23}$}{
$Intersections+=sign(F(v_1)\cdot n \sqcup F(v_2)\cdot n \sqcup F(v_3)\cdot n)$\;
}{
\ForAll{$a\in\{12,13,23\}$}{
\eIf{$0\in G_a$}
{$F_a=F(e_a)\cdot n$}
{$F_a=F(v_{a_1})\cdot n \sqcup F(v_{a_2})\cdot n$}
}
$Intersections+=sign(F_{12}\sqcup F_{13}\sqcup F_{23})$\;
}
}
}
\caption{getTransversality(Triangulation,Vertices)} 
\end{algorithm}
\end{small}

%

\section{Numerical Results}\label{S_NumRes}
In this section we describe the results of several experiments illustrating the behavior of the enclosure computations. Given a system and a domain, there are two numbers that can be changed, the number $d$, which controls the mesh size, and the value of $\eps$. In the experiments below, we use the normal form, \eqref{eq_singularHopfNormalForm}, for the singular Hopf bifurcation discussed in Section 3. We choose the same values of the constants as in the first part of \cite{GM11}: $\mu=10^{-2}$, $A=-0.05$, $B=0.001$, and $C=0.1$. We enclose the branch of the critical manifold $\{y=x^2\}$ with $x>0$. The results of four experiments are described below, in each of them we present the results as a plot of $\eta$ vs $\eps$. In the first experiment, we fix the domain as a small strip: $y\in[0.01,0.2]$, $z\in[-0.01,0.01]$ and give the results for several values of $\iota$ (defined implicitly by changing $d$). In the second, we take a square domain: $y\in[0.01,0.2]$, $z\in[-0.095,0.095]$ for comparison. Our third example analyzes the effect and usefulness of the tightening step described in Section \ref{SS_Tight}. In our fourth example, we investigate the heuristic constant $8$ in the denominator of (\ref{eq_update}); the domain and constants are from the first example with its finest mesh. Note that our domains are such that $\dot y <0$, which means that the assumptions from Section \ref{S_Existence} are satisfied, i.e., all trajectories with initial conditions in $\C$ leave in both forward and backward time, and tangencies of the vector field with $\partial \C$ occur along a plane where they have quadratic tangency. 

During the computations we use the function $G$ defined in (\ref{eq_SDef}) to prove the monotonicity properties that enables us to efficiently prove transversality. We note that for the example at hand, $G$ is
\begin{center}
$
\left(\begin{array}{c}
       -\frac{2x}{\eps}n_x-n_y+\frac{0.05}{\sqrt{\eps}}n_z\\
\frac{1}{\eps}n_x-\frac{0.001}{\eps} n_z \\
n_y-\frac{0.1}{\sqrt{\eps}}n_z
      \end{array}\right).
$ 
\end{center}
A trivial calculation shows that $G=(0,0,0)$ if and only if $x=-25\sqrt{\eps}$ and $n$ is a multiple of $(1,\frac{100}{\sqrt{\eps}},1000)$, so monotonicity always holds on the right branch of the critical manifold.

\subsection{Varying $\iota$}\label{SS_Iota}
The convergence rate of the enclosures at the vertex points should ideally be $O(\eps^2)$, since we have corrected for the linear term in the asymptotic expansion of $h_\eps$. Our interpolating surfaces between the vertex points are, however, linear. The discretization size thus puts a curvature dependent restriction on the tightness of the enclosure. 
In Figure \ref{F_varyIota}(a), we illustrate how $\eta$, for different values of $\iota$ first decreases, but then reaches a plateau. Looking at $\eta$ as a function of $\eps$, we see that as the mesh size decreases ($\iota$ increases), $\eta$ is approximately proportional to $\eps^2$, as expected. This gives a heuristic picture of how $\eta$ depends on $\eps$: first, there will be a period of quadratic convergence, where the accuracy depends on $\eps$; while at the end, the accuracy oscillates around some fixed value and depends on the mesh size. In the intermediate region, the accuracy depends both on the ratio of time scales and the mesh size. In this region, the exponent will decrease from $2$ to $0$. Figure \ref{F_varyIota}(b) illustrates the quadratic convergence region for the finest mesh size from Figure \ref{F_varyIota}(a).

As the plateau is reached $s(\eps)$ defined in \eqref{eq_relSlopeDef} starts to increase. For $\eps=0.1$ the enclosure is too wide for all trajectories inside to be slow. In Table \ref{T_varyIotaSlopes} we give the slopes on the $\eps$ interval $[10^{-1},10^{-4}]$ and bounds on the intervals where $\sqrt(\eps)s(\eps)\leq 1$, for the various $\iota$ values from Figure \ref{F_varyIota}(a). We are only able to prove that the cone fields are invariant for $\eps\leq 10^{-1.94}$, which means that for $\eps>10^{-1.94}$ the normal hyperbolicity is too weak for the algorithm to work. Thus, for the finest mesh size, we prove that the computable slow manifold exists for $10^{-6}\leq\eps\leq10^{-1.94}$. Finer meshes would prove the existence for smaller values of $\eps$.

\begin{figure}[h]
\begin{center}
\psfrag{E}{$-\log_{10}\eps$}
\psfrag{N}{$\log_{10}\eta$}
(a)\includegraphics[width=0.45 \textwidth]{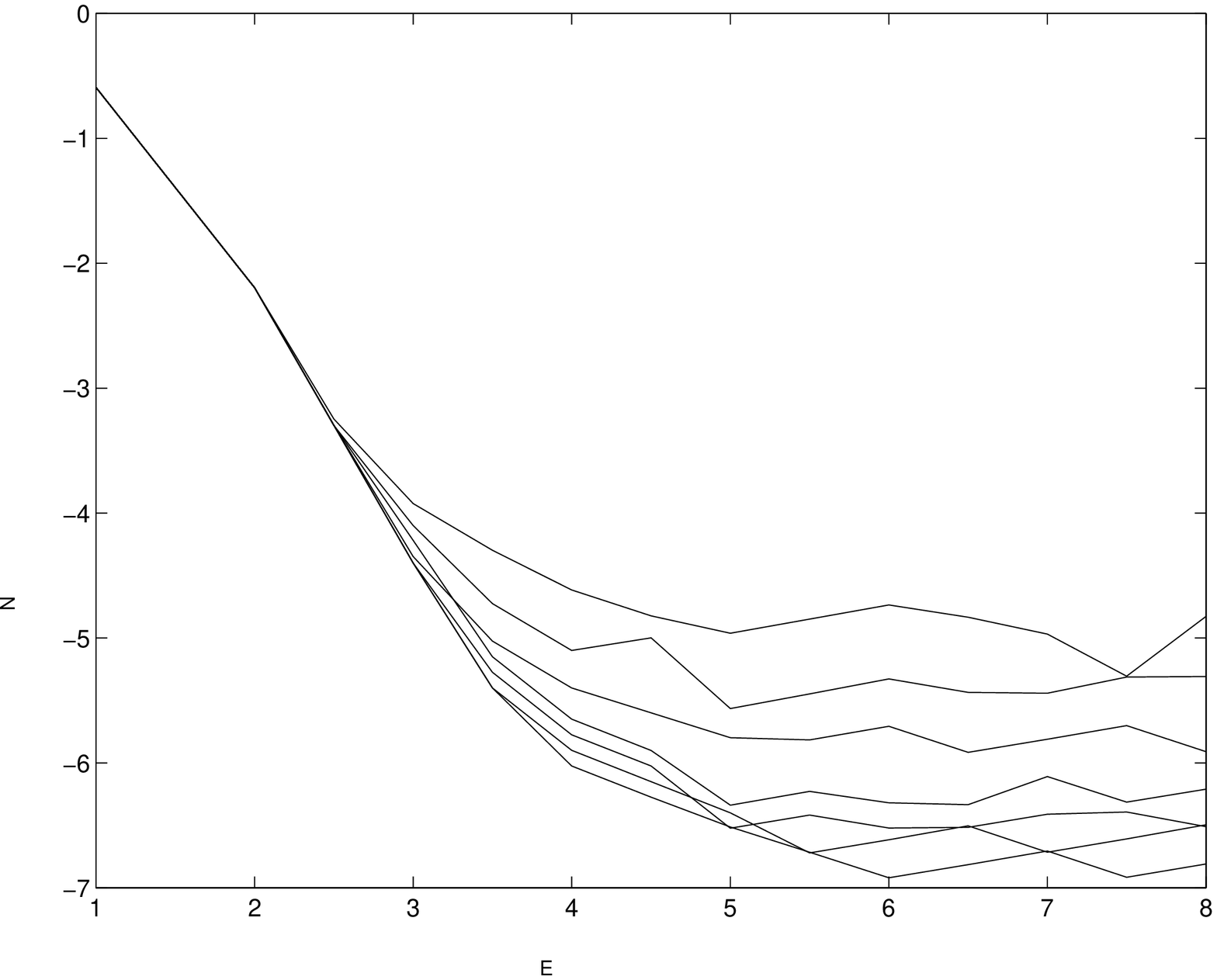}
(b)\includegraphics[width=0.45 \textwidth]{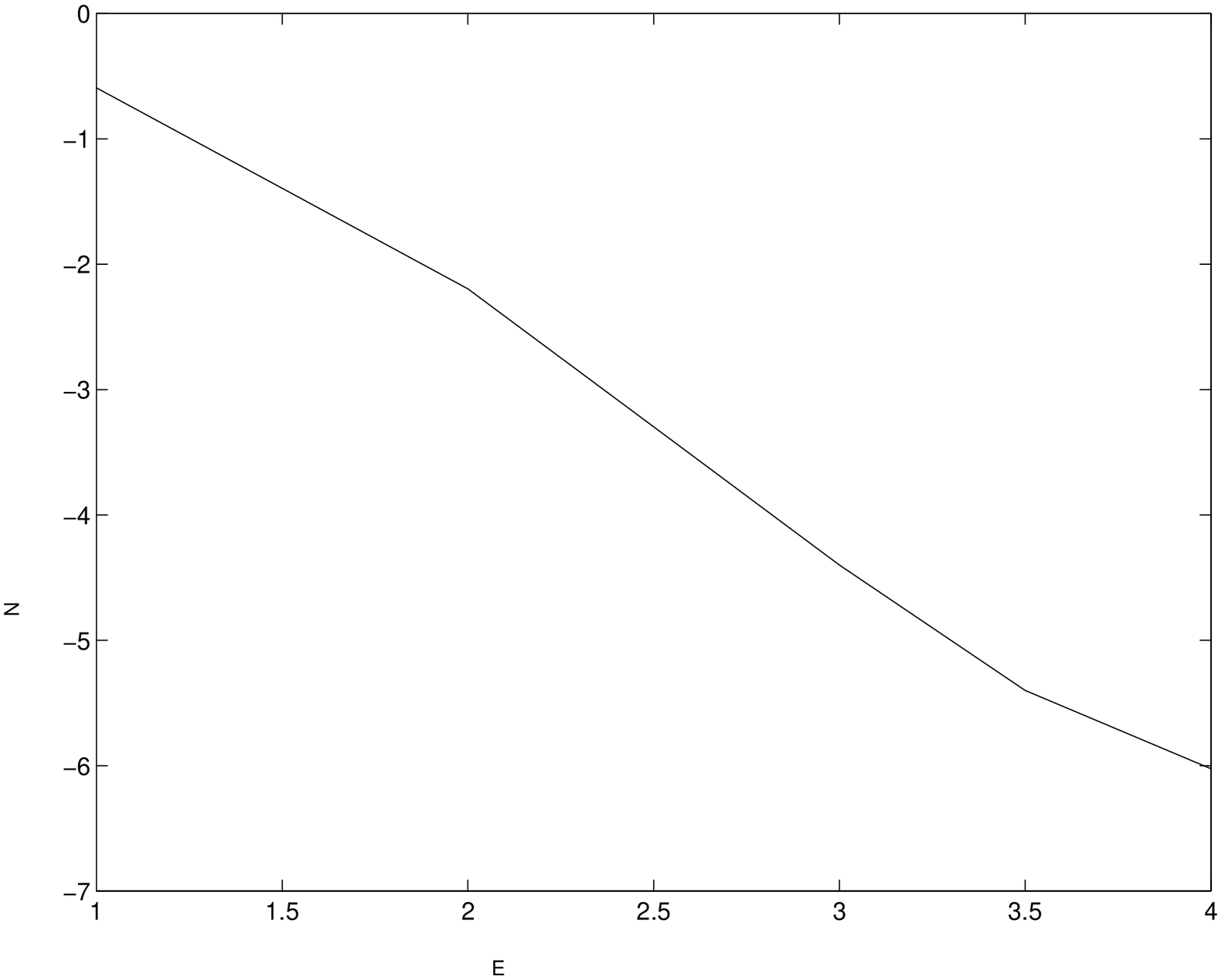}
\caption{(a) $\log_{10}\eta$ vs $-\log_{10}\eps$ for the various values of $\iota$ specified in Table \ref{T_varyIotaSlopes}. (b) Zoom in on $-\log_{10}\eps\in[1,4]$ for the value $\iota=162190$. The least squares approximation of the slope in the steepest part ($-\log_{10}\eps\in[2,3.5]$) is $-2.14$, on the whole interval $[1,4]$ it is $-1.89$.}\label{F_varyIota}
\end{center}
\end{figure}

\begin{table}[h]
\begin{center}
\begin{tabular}{c|ccccccc}
$\iota$ & 1200 & 4662 & 18236 & 40805 & 72239 & 112736 & 162190 \\ \hline
$Slope$ & -1.40 & -1.58 & -1.70 & -1.76 & -1.82 & -1.86 & -1.89 \\ \hline
$\max -\log_{10}\eps$ & 4 & 4.5 & 5 & 5& 6 & 6 & 6 
\end{tabular}
\caption{The second row is the least squares approximations of the slopes of $\log_{10}\eta(-\log_{10}\eps)$ on the domain $-\log_{10}\eps\in[1,4]$, for some different values of $\iota$. The third row gives the maximum value of $-\log_{10}\eps$, where the flow is slow, i.e., $s(\eps)\leq \frac{1}{\sqrt{\eps}}$.}
\label{T_varyIotaSlopes}
\end{center}
\end{table}

\subsection{Larger domain}\label{SS_LargeDomain}
In this subsection, we redo the experiment above for a square domain. There are roughly the same number of triangles in the $y$ and $z$ directions, rather than having only a couple of faces in each $\{y=const\}$ slice as we had in Subsection \ref{SS_Iota}. The resulting $\eta$ vs $\eps$ graph is given as Figure \ref{F_bigDomain}. We see that the results correspond to the coarser meshes in Figure \ref{F_varyIota}(a), which is natural, since a larger domain would require a larger number of faces. This illustrates that the results in Subsection \ref{SS_Iota} do not depend on the specific thin slice in the $z$-direction that we chose to study. For the two discretization sizes in Figure \ref{F_bigDomain} we have $\sqrt(\eps)s(\eps)\leq 1$ for $\eps\geq 10^{-4}$ and $\eps\geq 10^{-5}$, respectively. We are only able to prove that the cone fields are invariant for $\eps\leq 10^{-2.09}$, which means that for $\eps>10^{-2.09}$ the normal hyperbolicity is too weak for the algorithm to work. Thus, for the finest mesh size, we prove that the computable slow manifold exists for $10^{-5}\leq\eps\leq10^{-2.09}$.

\begin{figure}[h]
\begin{center}
\psfrag{E}{$-\log_{10}\eps$}
\psfrag{N}{$\log_{10}\eta$}
\includegraphics[width=0.45 \textwidth]{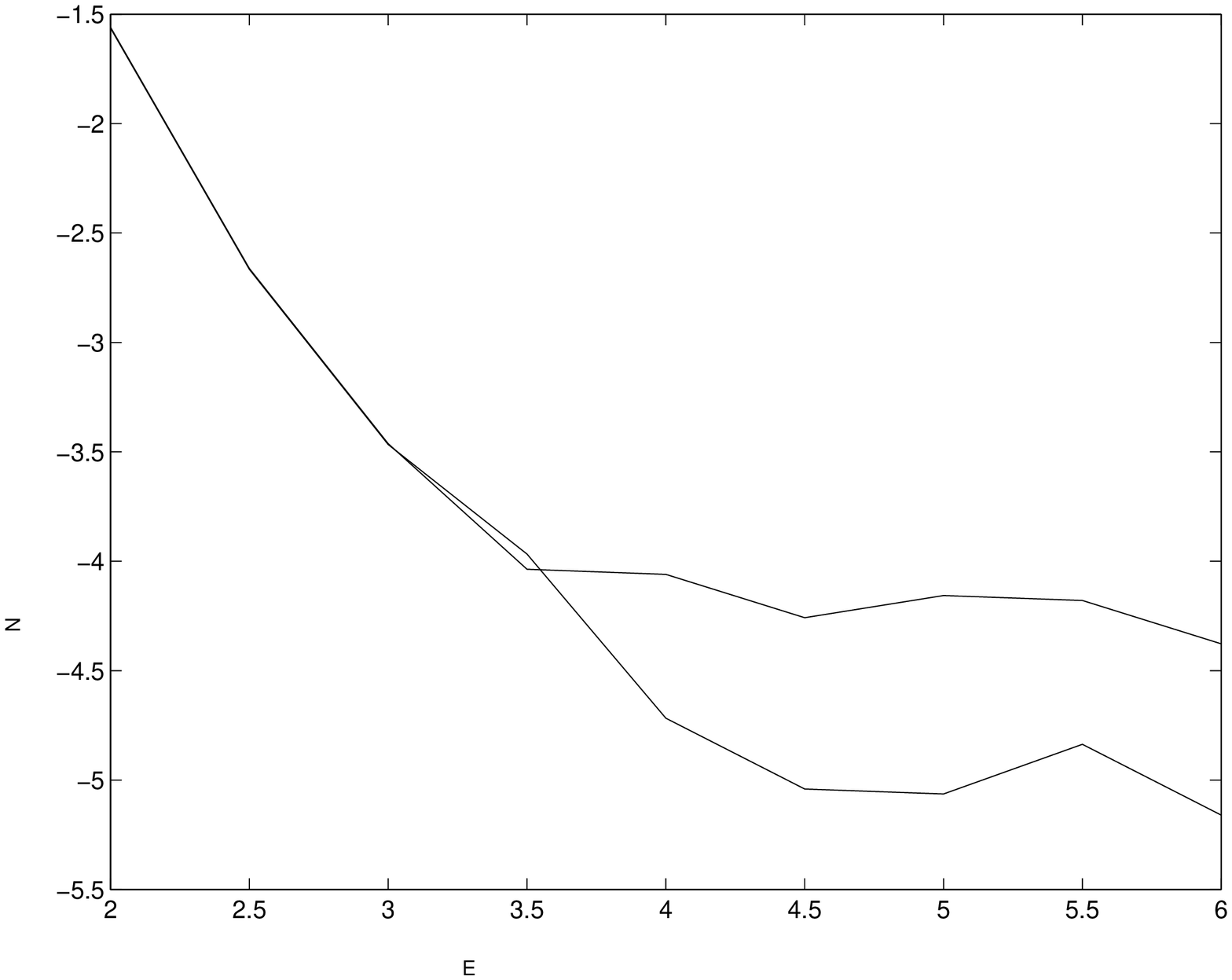}
\caption{$\log_{10}\eta$ vs $-\log_{10}\eps$ for the values $\iota=21810$ and $\iota=194396$. The least squares approximations of the slopes on the interval $[2,4]$ are $-1.27$ and $-1.52$, respectively.}\label{F_bigDomain}
\end{center}
\end{figure}

\subsection{The effect of the tightening step}\label{SS_Tight}
The tightening step is the slow part of the algorithm, and our program spends the vast majority of its computing time performing this step. It is therefore interesting to see how the results of a fast version of the algorithm, without the tightening step compares, performance wise. We run the example from Section \ref{SS_Iota}, with the highest precision ($\iota=162190$), and compare the results. The $\eta$ vs $\eps$ graph of the results is given as Figure \ref{F_with_OTightening}. In this example, the program spends $92.7\%$ of the computing time performing the tightening step. The total computing time in this case was $1526$ seconds on a 3.2 GHz Dual-Core AMD Opteron. 
For the example at hand, it might not be worth the extra effort to compute the tightening step or all applications. We do need it, however, for the application in Section \ref{S_Tang}.
 
\begin{figure}[h]
\begin{center}
\psfrag{E}{$-\log_{10}\eps$}
\psfrag{N}{$\log_{10}\eta$}
\includegraphics[width=0.45 \textwidth]{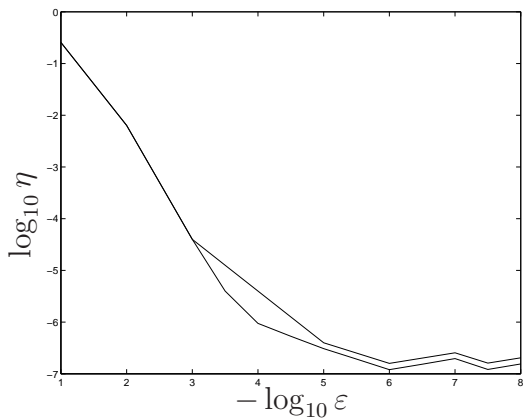}
\caption{$\log_{10}\eta$ vs $-\log_{10}\eps$ for the value $\iota=162190$, with and without the tightening step of the algorithm.}\label{F_with_OTightening}
\end{center}
\end{figure}

\subsection{Varying the improvement rate}\label{SS_ImpRate}
Our method contains a choice of the heuristic constant in the denominator of equation (\ref{eq_update}) that regulates the aggressiveness of the tightening step. In this subsection, we present a study on how the results depend on this choice. We use the same model as above, the domain from Subsection \ref{SS_Iota}, and the finest mesh size from Subsection \ref{SS_Iota} - $\iota = 162190$. For the purpose of this study, we denote the denominator of equation (\ref{eq_update}), by $l$. In Figure \ref{F_varyQuota} we display the results for $l=4,6,8$. For larger values of $l$, the results are virtually indistinguishable from the $l=8$ case. Typically, the updates mostly occur for smaller values of $\eps$. The reason is that for sufficiently small values of $\eps$ the vector field is almost equal to the layer equation, which makes the transversality condition almost trivial. Therefore, less smooth triangulations will still work, and the updates will not violate the transversality conditions.

\begin{figure}[h]
\begin{center}
\psfrag{E}{$-\log_{10}\eps$}
\psfrag{N}{$\log_{10}\eta$}
\includegraphics[width=0.45 \textwidth]{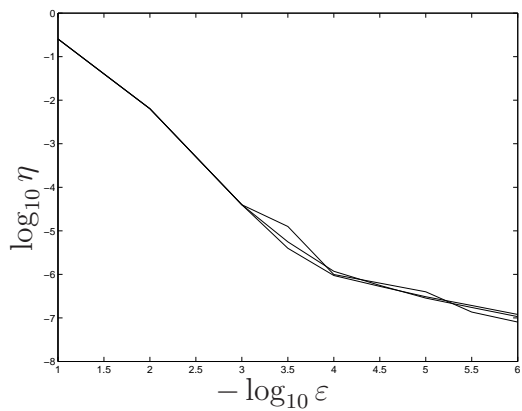}
\caption{$\log_{10}\eta$ vs $-\log_{10}\eps$ for the value $\iota=162190$, for updates with  $\|(v_R-v_L)\|$ divided by $4$, $6$, and $8$.}\label{F_varyQuota}
\end{center}
\end{figure}


\section{Tangencies}\label{S_Tang}
In this section we give a proof that the singular Hopf normal form, given by \eqref{eq_resc_shnf}, used here with $(A,B,C)=(-0.07,0.001,0.16)$, undergoes a tangency bifurcation of the unstable manifold of the saddle equilibrium, and the repelling slow manifold. We will often refer to these manifolds as the unstable manifold and the slow manifold denoted by $W^u_\mu$ and $S_\mu^r$, respectively. With a slow manifold for the rescaled system, we mean the image of a computable slow manifold for some $\eps$ under the map \eqref{eq_rescaling}.

Recall that (computable) slow manifolds are not unique. We therefore need to define what we mean by tangency, since if one choice of computable slow manifold is tangential, there will be other choices where the intersection is transversal. The natural setting is therefore to define when a one parameter family of slow manifolds is tangential to another manifold or family of manifolds.

\begin{definition}
A smooth one parameter family of manifolds, $\{M_\mu\}_{\mu\in[\mu_0,\mu_1]}$, intersects a one parameter family of families of computable slow manifolds $\{C_\mu\}_{\mu\in[\mu_0,\mu_1]}$ tangentially if for each choice of a smooth one parameter family of computable slow manifolds $\{S_\mu\}_{\mu\in[\mu_0,\mu_1]}$, $S_\mu\in C_\mu$, there is a value of $\mu\in(\mu_0,\mu_1)$ such that $S_\mu$ and $M_\mu$ intersect tangentially.
\end{definition}

In our proof, we compute one enclosing region $\C$ that satisfies the requirements from Section \ref{S_Existence} for all values of the parameter $\mu$ that appear in the proof. However, the computable slow manifolds might change with the parameter, since they are defined using \eqref{eq_hatSeps}. We prove that the one parameter family of unstable manifolds $W^u_\mu(p_\mu)$ moves through this fixed enclosing region, and that, as the family passes through $\C$, it always has to have a tangential intersection with at least one of the computable slow manifolds, regardless of how the smooth one parameter family of computable slow manifolds inside of $\C$ was chosen.   


\begin{theorem}\label{T_tangential}
For $0<\eps\leq 10^{-3}$, the singular Hopf normal form \eqref{eq_singularHopfNormalForm} undergoes a tangential bifurcation of a computable slow manifold and the unstable manifold of the equilibrium. The bifurcation occurs in the interval $[\mu_0,\mu_1]=[0.00454,0.004553]$ with fixed parameters $(A,B,C)=(-0.07, 0.001,0.16)$. 
\end{theorem}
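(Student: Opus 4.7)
The plan is to combine the enclosure $\C$ of the repelling slow manifold constructed by the method of Sections~\ref{S_OverMethod}--\ref{S_Method} with a rigorous enclosure of the two--dimensional unstable manifold $W^u_\mu$ of the saddle-focus equilibrium, and then to apply a connectedness argument over the parameter interval $[\mu_0,\mu_1]$. Throughout, the rescaled system \eqref{eq_resc_shnf} is used with $(A,B,C)=(-0.07,0.001,0.16)$, and the repelling branch $h_0^-(y,z)=-\sqrt{y}$ of the critical manifold (with correction \eqref{eq_h1DefM}) is the relevant one.

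First I would construct a \emph{single} enclosure $\C$, with boundary pieces $\L$ and $\R$ transverse to the vector field with opposite crossing directions, that encloses a repelling computable slow manifold for every $\mu$ in the compact interval $[\mu_0,\mu_1]$. This requires only a mild modification of Algorithm~\ref{mainAlgorithm}: in every transversality check, the parameter $\mu$ is treated as an interval $[\mu_0,\mu_1]$, and the range enclosures in \eqref{eq_transEq1}--\eqref{eq_vertexTrans} and the cone-field verifications of the preceding section are performed with $\mu$ ranging over this interval. The assumptions of Section~\ref{S_Existence} must be re-verified for this parametrized $\C$, so that a computable repelling slow manifold $S^r_\mu\subset\C$ exists for every $\mu\in[\mu_0,\mu_1]$ and every admissible $\eps\in(0,10^{-3}]$.

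Second, I would rigorously enclose the unstable manifold $W^u_\mu$. The equilibrium $p_\mu$ is found by a Krawczyk/Newton interval iteration applied to the algebraic system obtained from \eqref{eq_resc_shnf}, giving a continuous interval-valued function of $\mu$. Its two-dimensional unstable eigenspace is enclosed from the Jacobian at $p_\mu$, and a small fundamental annulus (an interval neighbourhood of a circle in the linearized unstable manifold) is taken as the set of initial conditions. Using a validated Taylor or $C^1$-Lohner integrator for the family indexed by $\mu\in[\mu_0,\mu_1]$, I propagate this fundamental annulus forward in time; the variational equations give enclosures of the tangent planes to $W^u_\mu$ along the trajectories. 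Once a trajectory enters $\C$, the enclosures $\L$ and $\R$ act as reliable ``walls'': I only need to determine, with interval arithmetic, through which face of $\partial\C$ the trajectory exits, and in what direction.

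The heart of the argument is then a two-endpoint comparison. At $\mu=\mu_0=0.00454$, I would verify that every trajectory in the propagated fundamental annulus of $W^u_{\mu_0}$ either remains on the outer (attracting) side of $\R$ or exits $\C$ through $\R$, and in particular never reaches $\L$; this shows that $W^u_{\mu_0}$ lies entirely on one side of any computable slow manifold contained in $\C$. At $\mu=\mu_1=0.004553$, I would verify the opposite: a positive-measure strip of the unstable manifold traverses $\C$ and exits through $\L$, so $W^u_{\mu_1}$ has points on the other side of every computable slow manifold in $\C$. Both verifications are done by integrating the fundamental annulus forward until each trajectory leaves $\C$, and applying the sign check \eqref{eq_vertexTrans} to determine the exit face.

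Finally, the tangency follows from a continuity argument tailored to the definition of tangential intersection of families. Fix any smooth one-parameter family $\{S_\mu\}_{\mu\in[\mu_0,\mu_1]}$ of computable slow manifolds contained in $\C$. Define $\phi(\mu)$ to be the signed ``fast-direction'' distance from $S_\mu$ to the furthest point of $W^u_\mu\cap\C$ (measured with the orientation inherited from $\R\to\L$). The outputs of the two endpoint verifications give $\phi(\mu_0)<0$ and $\phi(\mu_1)>0$, and $\phi$ depends continuously on $\mu$ because both $W^u_\mu$ and $S_\mu$ do. Hence there is $\mu^*\in(\mu_0,\mu_1)$ with $\phi(\mu^*)=0$, i.e.\ $W^u_{\mu^*}$ touches $S_{\mu^*}$ from one side without crossing, which, for two smooth two-dimensional manifolds in $\RR^3$, forces a tangential intersection. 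Since the family $\{S_\mu\}$ was arbitrary, the required tangential bifurcation occurs in $[\mu_0,\mu_1]$.

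The main obstacle I anticipate is the second step: rigorously propagating the two-dimensional unstable manifold far enough to reach the enclosure $\C$ while keeping interval blow-up under control. The enclosure $\C$ of the repelling slow manifold itself is relatively inexpensive because the method of Sections~\ref{S_OverMethod}--\ref{S_Method} is local; but the unstable manifold emerges from a saddle-focus with a spiralling flow, so the fundamental annulus must be subdivided finely, and the integrator must use a $C^1$-rigorous scheme (e.g.\ the Lohner algorithm with a QR-based coordinate frame) to obtain usable enclosures of both the manifold and its tangent planes up to the first crossing of $\partial\C$. Treating $\eps$ as a parameter with $0<\eps\leq10^{-3}$ adds a further layer: either one performs the verification at a grid of $\eps$ values and appeals to the scaling \eqref{eq_rescaling}, or one treats $\eps$ as an interval throughout, which is the cleaner but computationally more expensive route.
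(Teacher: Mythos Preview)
Your broad strategy---enclose the repelling slow manifold uniformly in $\mu$, rigorously propagate $W^u_\mu$, and compare sides at the two endpoints of $[\mu_0,\mu_1]$---matches the paper's, but the execution differs in two places that matter.

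First, the paper does not compare $W^u_\mu$ directly against the faces $\L,\R$ of $\C$. Instead it fixes a half-plane $\Sigma\subset\{Y=2\}$ transverse to the flow, reduces everything to the first-hit map $\Psi$ into $\Sigma$, and works with curves in a rectangle $R\subset\Sigma$. Only a carefully chosen sub-box $B_0$ of the fundamental domain (parametrized by $(\mu,Y)$, selecting just those trajectories that actually reach $\Sigma$) is integrated, and the verification (Assumption~\ref{A_Tangency}) is that $\Psi(\partial B_0)\subset R$ with three sides landing strictly to the right of $S^r_\mu\cap R$ and one marked point $M_5$ on the $\mu=\mu_1$ side landing strictly to the left. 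Lemma~\ref{L_tangential} then turns this into a tangency via a one-variable minimum in $Z$. Your version---integrate the full fundamental annulus and check that at $\mu_0$ nothing ``reaches $\L$''---has a genuine gap as stated: your stopping rule ``until each trajectory leaves $\C$'' does not apply to the many trajectories in $W^u_{\mu_0}$ that never enter $\C$ at all, so the claim that $W^u_{\mu_0}$ lies entirely on one side of every computable slow manifold in $\C$ is an infinite-time statement you have not reduced to a finite computation. The Poincar\'e section removes this difficulty and simultaneously makes the tangency argument (your $\phi$, the paper's $\mathrm{dist}(\mu,Z)$) a statement about curves in a plane.

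Second, your treatment of $\eps$ is more laborious than necessary. The rescaled system \eqref{eq_resc_shnf} contains no $\eps$, so the entire geometric verification (h-set with cones at $p_\mu$, propagation to $\Sigma$, position relative to $S^r_\mu\cap R$) is done once. The only $\eps$-dependence is whether the enclosed object qualifies as a \emph{computable} slow manifold, i.e.\ whether $\sqrt{\eps}\,s(\eps)\le1$; the paper checks directly that this quantity is monotone non-decreasing in $\eps$ for the singular Hopf normal form, so verifying it at $\eps=10^{-3}$ gives the result for all $0<\eps\le10^{-3}$. Neither a grid in $\eps$ nor an interval-$\eps$ integration is needed.

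Two smaller differences worth noting: the paper encloses $W^u_\mu$ near $p_\mu$ via an h-set with cones and a covering relation for the time-$6.3$ map (yielding a Lipschitz-$0.1$ graph over a disc of radius $10^{-4}$), then sharpens this by backward integration, rather than a Krawczyk step on the eigenspace; and it appends $\dot\mu=0$ to \eqref{eq_resc_shnf} to integrate the $\mu$-parametrized boundary of $B_0$ in one four-dimensional shot.
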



The main argument in the proof of Theorem  \ref{T_tangential} is illustrated in Figure \ref{F_finalBox}. We consider the intersections of $S_\mu^r$ and $W^u_\mu$ with a half-plane $\Sigma$. At $\mu_0$, the two manifolds do not intersect each other in $\Sigma$. Notice that the unstable manifold seems to translate to the left relative to the repelling slow manifold  as $\mu$ increases. At $\mu_1$, the two manifolds intersect transversally in $\Sigma$. In the proof of the theorem, we formalize and prove these observations, and moreover show that the first intersection of the two manifolds is tangential. The vector field is transverse to $\Sigma$, so a tangential intersection of the manifolds in $\Sigma$ corresponds to a tangential intersection in the 3-dimensional phase space.

In the proof of Theorem \ref{T_tangential}, we will at times work with the singular Hopf normal form \eqref{eq_singularHopfNormalForm}, and at other times with the rescaled singular Hopf normal form \eqref{eq_resc_shnf}. Recall that, in the rescaled system we use upper case variables and parameters ($\mu$ is scale independent). Note that we do not assert that the tangency of the manifolds is unique. We will first prove Theorem \ref{T_tangential} for $\eps=10^{-3}$. For smaller $\eps$, the result follows from the rescaling \eqref{eq_rescaling} and the following property of the relative slope condition for the singular Hopf normal form (recall that the tangency will occur in different parts of phase space for different values of $\eps$):
$$
\sqrt{\eps}s(\eps) = \sqrt{\eps}\dfrac{2|X'|\sqrt{\eps Y}}{\sqrt{\eps}|Y'|+|Z'|} =  2|X'|\sqrt{Y}\dfrac{\eps}{\sqrt{\eps}|Y'|+|Z'|},
$$
since $\eps>0$, $|Y'| \geq 0$, and $|Z'|\geq 0$, this is a non-decreasing function of $\eps$.

Hence, if 
$$
s(\eps)\leq\dfrac{1}{\sqrt{\eps}},
$$
then
$$
s(\eps')=\dfrac{1}{\sqrt{\eps'}}\sqrt{\eps'}s(\eps')\leq \dfrac{1}{\sqrt{\eps'}}\sqrt{\eps}s(\eps) \leq \dfrac{1}{\sqrt{\eps'}}, \quad {\rm for \, all }\,\, 0<\eps'<\eps.
$$
The existence of computable slow manifold at a particular value of $\eps$ thus implies the existence of computable slow manifolds at all smaller values of $\eps$. Note that these computable slow manifolds will appear at different positions in the phase space for different values of $\eps$.

\begin{figure}[h]
\begin{center}
\includegraphics[width=0.7 \textwidth]{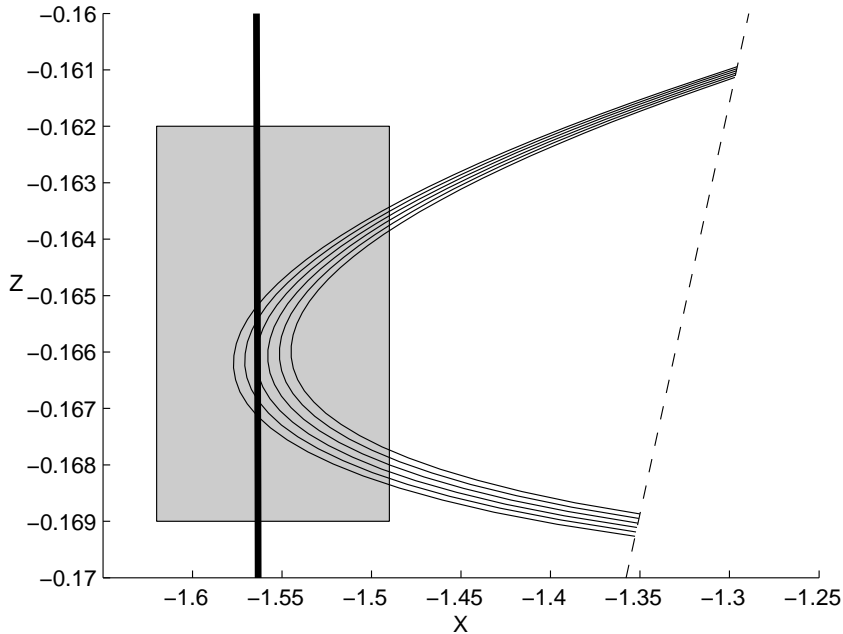}
\caption{An example family $S_\mu^r$ (thick line) and images of fundamental domains (solid curves) of $W^u_\mu$ for a selection of $\mu$ in $[\mu_0, \mu_1]$, shown here intersected with $\Sigma$. The boundary of $\Sigma$ is drawn as a dashed line, the rectangle $R$ is drawn as a shaded region. As $\mu$ is varied in $[\mu_0,\mu_1]$, the slow manifold only moves by amounts too small to be noticeable at the scale of the diagrams. 
For each $W^u_\mu$ included in the figure, we plot the first intersection of the trajectories with $\Sigma$, if the trajectory reaches $\Sigma$.}
\label{F_finalBox}
\end{center}
\end{figure}

{\bf Set-up. } 
We will work with 
$$
\Sigma :=\{(X,Y,Z)\in \RR^3: Z \geq -0.1693+0.16\, (X+1.353), Y=2\}
$$ 
and
$$
R:=\{(X,Y,Z)\in\Sigma:-1.62\leq X \leq -1.49, -0.169\leq Z \leq -0.162\}.
$$
We next list verifiable conditions that via Lemma \ref{L_tangential} below will prove Theorem \ref{T_tangential}. Many of these conditions are illustrated in Figure ~\ref{F_initialFinalBox}. Let 
$$
Y_{min},Y_{max}: [ \mu_0,\mu_1]\rightarrow \mathbb{R}
$$
be continuous with $Y_{min}(\mu)\leq Y_{max}(\mu)$. Further define a 2-dimensional ``box'' by
$$
B_0:=\{(\mu,X,Y,Z) \in  [\mu_0,\mu_1]\times W^u_\mu:\, X= \pi_X(p_\mu), Y_{min}(\mu)\leq Y\leq Y_{max}(\mu)\}.
$$ 
Note that the requirement $(X,Y,Z)\in W^u_\mu$ uniquely defines $Z$ as a function of $(\mu,X,Y)$. Denote the corners of $B_0$ corresponding to 
$$(\mu,Y)\in\{(\mu_1,Y_{max}(\mu_1)), 
(\mu_0,Y_{max}(\mu_0)),
(\mu_0,Y_{min}(\mu_0)), 
(\mu_1,Y_{min}(\mu_1))\}$$
by $\{M_1, M_2, M_3, M_4\}$. Denote the flow map of system \eqref{eq_resc_shnf} from $B_0$ to $\Sigma$, wherever it is defined, by $\Psi$. The next step of our construction is to introduce a number of assumptions, that are verifiable using validated numerics, i.e., they can be restated as a finite number of computable conditions. The geometry of these assumptions is illustrated in Figure \ref{F_initialFinalBox}. In Lemma \ref{L_tangential} below we show that these assumptions are sufficient to prove Theorem \ref{T_tangential}.

\begin{assumption}\label{A_Tangency}
Assume that the following conditions are satisfied:
\begin{enumerate}
\item[(I)] For $\mu\in[\mu_0,\mu_1]$, a family of repelling slow manifolds $S_\mu^r$ intersects $R$ in a single family of curves $C_\mu$ that enters $R$ at the top and exits $R$ at the bottom.
 
\item[(II)] The map $\Psi$ is defined on the three sides of $B_0$  corresponding to $Y=Y_{min}(\mu), Y=Y_{max}(\mu)$ and $\mu=\mu_0$, and their images under $\Psi$ lie in $R$ and strictly to the right of $S_\mu^r\cap R$.

\item[(III)]  The map $\Psi$ is defined on $\{ (\mu, X,Y,Z)\in B_0 : \mu=\mu_1\}$ and its image lies in $R$. Furthermore, $\Psi(M_1)$ and $\Psi(M_4)$ are strictly to the right of $S_\mu^r\cap R$, and there exists a point $M_5\in\{ (\mu, X,Y,Z)\in B_0 : \mu=\mu_1\}$ such that 
$\Psi(M_5)$ lies strictly to the left of  $S_\mu^r\cap R$ in $\Sigma$.

\item[(IV)] The map $\Psi$ is well-defined on $B_0$.
\end{enumerate}
\end{assumption}

\begin{figure}[h]
\begin{center}
(a)\includegraphics[width=0.46 \textwidth]{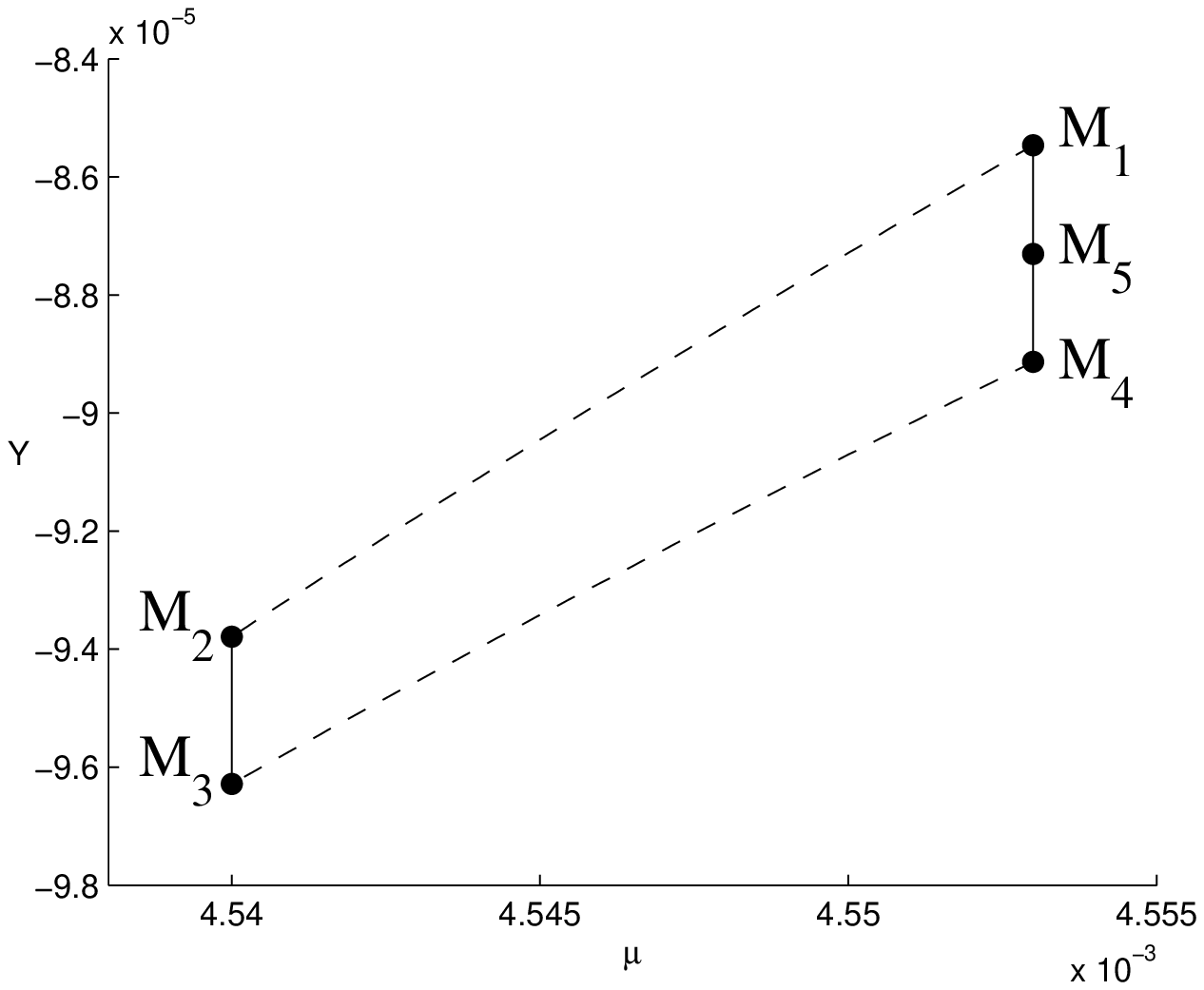}
(b)\includegraphics[width=0.46 \textwidth]{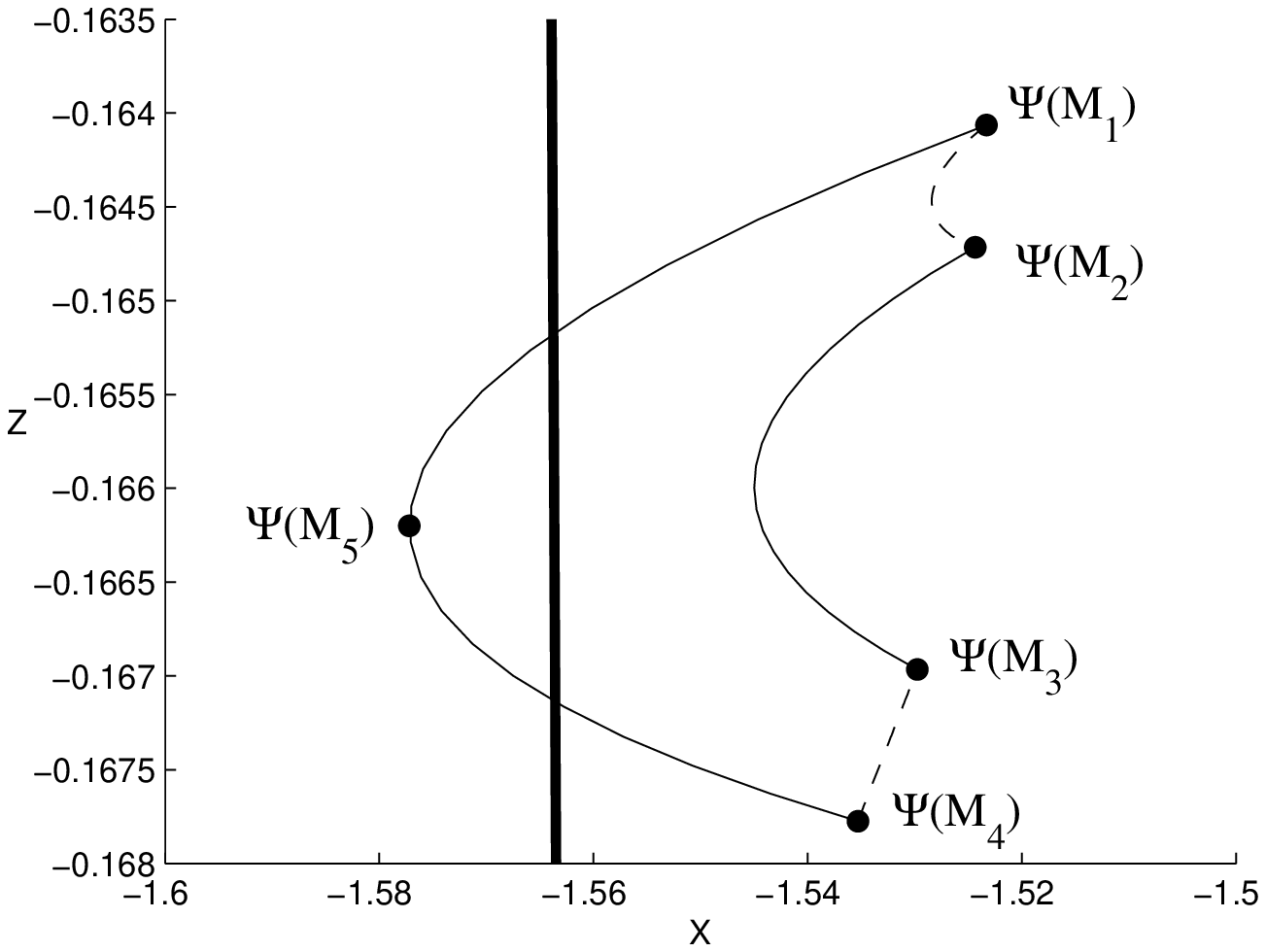}
\caption{Illustration of the assumptions made in Assumption \ref{A_Tangency}. The box $B_0$ shown in pane (a) maps into $R\subset\Sigma$ as shown in pane (b). As $\mu$ is varied in $[\mu_0,\mu_1]$, the slow manifold (thick solid line) only moves by amounts too small to be noticeable at the scale of the diagrams.}\label{F_initialFinalBox}
\end{center}
\end{figure}

\begin{lemma} \label{L_tangential} Suppose that Assumptions \ref{A_Tangency} are satisfied. Then $S^r_{\mu}$ and $W^u_\mu$ intersect tangentially for some $\mu^* \in [\mu_0,\mu_1]$.
\end{lemma}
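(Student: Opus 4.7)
Parameterize $B_0$ by $(\mu,Y)\in\Omega:=\{(\mu,Y):\mu\in[\mu_0,\mu_1],\,Y_{min}(\mu)\le Y\le Y_{max}(\mu)\}$; by (IV) the flow map $\Psi$ is defined on all of $\Omega$, and by (I) the curve $C_\mu=S^r_\mu\cap R$ is a $C^1$ graph $X=g_\mu(Z)$ running from the top to the bottom of $R$, and hence separates $R$ into a ``left'' and a ``right'' component. Set
$$
f(\mu,Y):=\pi_X(\Psi(\mu,Y))-g_\mu(\pi_Z(\Psi(\mu,Y))),
$$
a continuous ($C^1$) function on $\Omega$ that is positive, negative, or zero according to whether $\Psi(\mu,Y)$ lies to the right of, to the left of, or on $C_\mu$ in $\Sigma$.

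Conditions (II)--(III) translate into boundary data for $f$: $f>0$ on the three sides $\{\mu=\mu_0\}$, $\{Y=Y_{min}(\mu)\}$, $\{Y=Y_{max}(\mu)\}$ and at the corners $M_1,M_4$ of the $\mu=\mu_1$ side, while $f(M_5)<0$ at an interior point of the $\mu_1$-side. Define
$$
\mu^*:=\inf\bigl\{\mu\in[\mu_0,\mu_1]:\min_Y f(\mu,Y)\le 0\bigr\}.
$$
Uniform continuity of $f$ on compact $\Omega$ makes $\mu\mapsto\min_Y f(\mu,Y)$ continuous, so the defining set is closed and $\mu^*$ is attained. Positivity of $f$ on the $\mu_0$-slice forces $\mu^*>\mu_0$, while $\mu^*=\mu_1$ would imply $f\ge 0$ throughout the $\mu_1$-slice, contradicting $f(M_5)<0$. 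Hence $\mu^*\in(\mu_0,\mu_1)$, and at $\mu=\mu^*$ we have $f(\mu^*,\cdot)\ge 0$ with $f(\mu^*,Y^*)=0$ at some $Y^*$; by (II), $Y^*\in(Y_{min}(\mu^*),Y_{max}(\mu^*))$.

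Since $Y^*$ is an interior minimum of $f(\mu^*,\cdot)$ we obtain $\partial_Y f(\mu^*,Y^*)=0$, and a one-line computation identifies this vanishing with the condition that the $C^1$ curves $Y\mapsto\Psi(\mu^*,Y)$ and $C_{\mu^*}$ have parallel tangent vectors at their common point $\Psi(\mu^*,Y^*)\in\Sigma$. Because $\Sigma$ is transverse to the vector field of \eqref{eq_resc_shnf}, and because both $W^u_{\mu^*}$ and $S^r_{\mu^*}$ are two-dimensional and invariant under the flow, the tangent plane to each at the meeting point is spanned by the vector field together with the tangent line of its trace in $\Sigma$; equality of the trace tangents then upgrades automatically to equality of the full two-dimensional tangent planes, giving the desired tangency of $W^u_{\mu^*}$ with $S^r_{\mu^*}$ in $\RR^3$. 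The hypotheses (I)--(IV) do not single out a specific smooth one-parameter family $\{S^r_\mu\}$, so the construction yields a $\mu^*$ for every admissible family, matching the paper's definition of tangential intersection of one-parameter families. The main subtlety lies in the words ``$C^1$ curves'': the whole argument rests on $C_\mu$ being $C^1$ with $C^1$-dependence on $\mu$, which in turn requires the computable slow manifold to be $C^1$ at fixed $\eps$ and transverse to $\Sigma$ throughout $R$---exactly the content of (I).
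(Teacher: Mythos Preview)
Your argument is correct and follows essentially the same strategy as the paper: define a signed horizontal distance between $W^u_\mu\cap\Sigma$ and $S^r_\mu\cap\Sigma$, take the first parameter $\mu^*$ at which it vanishes, and read off tangency from the fact that a nonnegative $C^1$ function with an interior zero has vanishing derivative there. The only difference is bookkeeping: the paper parameterizes the unstable-manifold curve by the $Z$-coordinate in $R$ (hence the $\min$ inside its $\mathrm{dist}(\mu,Z)$), whereas you parameterize it by the flow-box coordinate $Y$ in $B_0$, which makes $f(\mu,Y)$ single-valued from the start and spares you that inner minimum. Your explicit lift from tangency of the traces in $\Sigma$ to tangency of the two-dimensional invariant surfaces in $\RR^3$ (via the common flow direction) is something the paper states before the lemma but does not spell out inside the proof; including it is a plus.
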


\begin{proof}[Proof of Lemma \ref{L_tangential}] 
Fix a family of slow repelling manifolds $S^r_{\mu}$, $\mu\in[ \mu_0, \mu_1]$. Since all of $B_0$ reaches $\Sigma$ by Assumption \ref{A_Tangency}.IV, the existence and uniqueness theorem for ODEs implies that the map from $B_0$ to $\Sigma\times [ \mu_0, \mu_1]$ is continuous. We may thus define the continuous function 
\begin{equation*}
\mathrm{dist}(\mu,Z):=\min \left(\pi_X (W^u_\mu|_Z\cap R) - \pi_X (S^r_{\mu} |_Z\cap R) \right)
\end{equation*}
where  $Z$ is required to lie in the range of $Z$ values of  $R$ and $|_Z$ denotes restriction to $Z$. Consider 
$$\mu^* = \min \{\mu\in [\mu_0,\mu_1]:\, \min_Z \mathrm{dist}(\mu,Z)=0\},
$$
the existence of which follows from Assumptions \ref{A_Tangency}.II and \ref{A_Tangency}.III, and the continuity of $\mathrm{dist}(\mu,Z)$. Clearly $S_{\mu^*}^r\cap R$ and $W^u_{\mu^*}\cap R$ intersect in at least one point $(X_0,Y_0,Z_0)$. Moreover, 
$$
\pi_X(W^u_{\mu^*}|_Z\cap R) - \pi_X(S_{\mu^*} |_Z\cap R)\geq 0
$$ 
for the range of $Z$ values that lie in $R$. Since $W^u_{\mu^*}$ and $S_{\mu^*}$ are smooth surfaces in $\RR^3$, transverse to $R$, we can now consider the Taylor series expansion of 
$$
\pi_X(W^u_{\mu^*}|_Z\cap R) - \pi_X(S_{\mu^*} |_Z\cap R)
$$
 at $X_0$ with respect to $X$ and conclude that its linear term must be zero. We have thus shown that the manifolds $W^u_{\mu^*}$ and $S_{\mu^*} $  intersect tangentially in $R$.
\end{proof}

Subsection \ref{SS_SlowManifold} below gives details on the verification of Assumption \ref{A_Tangency}.I. Subsection \ref{SS_UnstableManifold} describes in detail how $\Sigma$, $Y_{min}$, and $Y_{max}$ are chosen, and provides details on the verification of Assumptions \ref{A_Tangency}.II, \ref{A_Tangency}.III, and \ref{A_Tangency}.IV. 


\subsection{Slow manifold computations.} \label{SS_SlowManifold}
Showing that for $\mu\in [\mu_0,\mu_1]$, a family of repelling slow manifolds $S_\mu^r$ intersects $R$ in a single family of curves is a straight-forward application of the methods developed in the earlier parts of this paper: we compute slow manifold enclosures for the rescaled singular Hopf system \eqref{eq_resc_shnf} over a domain that corresponds to 
$$
1\leq Y \leq 500, \quad -0.169\leq Z \leq -0.162,
$$
and for the singular Hopf parameter values
$$
\{(\mu, A, B, C)\in \mathbb{R}^4: \mu\in [4.54,4.553]\times 10^{-3}, A=-0.07, B=0.001, C=0.16\}.
$$
The actual computations for the enclosures are performed in the original singular Hopf coordinates of \eqref{eq_singularHopfNormalForm}, as described in earlier sections of this paper. Let $\epsilon_0 = 10^{-3}$, in the original coordinates of the singular Hopf normal form, \eqref{eq_singularHopfNormalForm}, the domain now corresponds to 
$$
D=[y_{min}, y_{max}]\times [z_{min}, z_{max}],
$$ 
where
$y_{min}=1.0\, \epsilon_0,y_{max}=500.0 \, \epsilon_0,z_{min}=-0.169\, \sqrt{\epsilon_0},z_{max}=-0.162\, \sqrt{\epsilon_0}$, and the set of singular Hopf system parameters is
$$
\left\{(\epsilon,\mu,a,b, c)\in \mathbb{R}^5:  \epsilon=\epsilon_0, \mu\in [4.54,4.553]\times 10^{-3}, a=-\frac{0.07}{\sqrt{\epsilon_0}}, b=\frac{0.001}{\epsilon_0}, c=\frac{0.16}{\sqrt{\epsilon_0}} \right\}.
$$
The enclosures obtained show that points $(X,Y,Z)$ in the repelling slow manifold over $D$ must satisfy 
$$
-1.5726<X<-1.5539.
$$
Moreover, the methods of Section \ref{S_Existence} of this paper were used to check that at any parameter in the above-described set,  $S_\mu^r$ is a graph over a domain $D \subset S_0$, and that $s(\eps_0)\leq1.027$. Again, note that the computation is independent of the choice of $\eps_0$, since a different choice of $\eps_0$ would imply that we should enclose a different part of the phase space. Since the above $z_{min}$ and $z_{max}$ were chosen large and small enough, respectively, to conclude that the enclosed repelling slow manifolds enter $R$ at the top and leave $R$ at the bottom, we have shown the existence of the sought family of slow manifolds.

\noindent {\bf Remark.} Even though the slow manifold intersected with the section $\Sigma$ in our case resembles a fixed straight line, enclosing it with the precision required for the proof to work is a hard problem. To determine rigorously the location of a slow manifold is difficult even in the easiest non-trivial cases. The problem is amplified in our case since we need high accuracy in the rescaled system, where the errors are blown up by a factor $O\left(\frac{1}{\sqrt{\eps}}\right)$.

\subsection{Unstable manifold computations.}\label{SS_UnstableManifold} 
We now describe how $Y_{min}$ and $Y_{max}$ are chosen for Assumptions \ref{A_Tangency}.II and \ref{A_Tangency}.III to be satisfied. Recall that Figure \ref{F_finalBox} was obtained by examining trajectories in entire fundamental domains of $W^u_\mu$ for $\mu\in[\mu_0, \mu_1]$, and that 
some of these trajectories did not reach $\Sigma$. We chose $Y_{min}$ first and then $Y_{max}$ in such a way that $B_0$ is on the one hand small enough for the map to $\Sigma$ to be well-defined and its image to be in $R$, and on the other hand large enough for the images of marked points $M_1, \ldots, M_5$ and of the boundaries of $B_0$ to map to the left or right of $S_\mu^r$ as required by Assumptions \ref{A_Tangency}.II and \ref{A_Tangency}.III.

\subsubsection{Computing $Y_{min}(\mu)$.} 
Let $L\subset \Sigma$ be the line given by 
$$
L:=\{(X,Y,Z)=(t, 3, -0.1678+0.16\,(t+1.535))\,: t\in\RR\}.
$$
This line lies well within $\Sigma$ and is parallel to $\partial \Sigma$. It is moreover transverse to the parts $W^u_\mu$ that reach $\Sigma$, for all $\mu\in[\mu_0, \mu_1]$. The boundary value problem (BVP) for the flow of the rescaled singular Hopf normal form with the following boundary conditions and $\mu$ as the continuation parameter is thus well-defined: 

\begin{itemize}
\item trajectories have to start in the unstable eigenspace of $p_\mu$
\item trajectories have to start at an $X$ coordinate equal to that of $p_\mu$
\item trajectories have to end on $L$. 
\end{itemize}
Note that there are multiple solutions to this BVP, as each trajectory in the unstable manifold satisfies the initial boundary condition multiple times as it spirals away from the equilibrium point $p_\mu$, but we choose one by selecting a fundamental domain for its endpoint near $p_\mu$.  The equilibrium points $p_\mu=(x_\mu,x_\mu^2,x_\mu)$ satisfy the equation $x_\mu=-45+\sqrt{45^2-1000\mu}$.
We use a trajectory that initially has a $Y$ coordinate approximately $10^{-4}$
larger than that of $p_\mu$, deferring a discussion of the suitability of this distance to a remark at the end of this subsection. Solving the BVP with a shooting method, we find that the $Y$ coordinates of the solutions to the boundary value problem are close to linear in $\mu$ on the interval $[\mu_0, \mu_1]$. We thus define 
$$
Y_{min}(\mu)=x_\mu^2-9.37888799540\times 10^{-5} + 0.640307054861539(\mu-\mu_0),
$$
to be the linear function in  $\mu$ that approximates the $Y$ coordinates of the solution endpoints to the BVP.

\subsubsection{Computing $Y_{max}(\mu)$.} 
After inspecting diagrams similar to Figure \ref{F_initialFinalBox}, we defined $Y_{max}(\mu)$ in an ad-hoc manner to be the linear function for which the box $B_0$ contains $25\%$ of a fundamental domain of $W^u_{\mu_0}$ and $40\%$ of a fundamental domain of $W^u_{\mu_1}$:
$$
Y_{max}(\mu)=x_\mu^2 -9.628167607168\times 10^{-5} + 0.549805711513847(\mu-\mu_0).
$$

\subsubsection{Computing unstable manifolds.}\label{SSS_WU} 
The complexity of the singular Hopf normal form makes it unfeasible to compute unstable manifolds analytically. We therefore begin by describing a method to rigorously compute the location of $W^u_\mu$. We will use the method developed in Section \ref{S_Method} together with covering relations with cone conditions \cite{Z09} and validated numerical integration \cite{L88,NJ99,NJC99,NJP01} to enclose and propagate the manifolds, respectively. In our implementation \cite{Progs} we use the software VNODE-LP \cite{Vnode} to integrate the system \eqref{eq_resc_shnf}. The computations are done using order $11$ Taylor expansions in VNODE-LP.

Since our proof relies heavily on the concept of h-sets and the method of covering relations we provide an informal introduction here. For a complete formal description of these concepts and methods we refer the reader to \cite{ZG04,Z09}. In \cite{ZG04} h-sets and covering relations are introduced, and in \cite{Z09} the concept of an h-set with cones is introduced together with the appropriate modification to the definition of a covering relation. An h-set is a compact hyperbolic like set, in the sense that it has expanding and contracting directions, in an appropriate coordinate system. An h-set is a set together with the coordinates. A map together with two h-sets, $h_1, h_2$, is said to satisfy covering relations if $h_1$ is mapped across $h_2$ under the map. Across in this setting means that the boundaries of $h_1$ transversal to the expanding directions are mapped outside $h_2$ and the image of $h_1$ does not intersect the boundaries of $h_2$ transversal to the contracting directions. Using the Brouwer degree one can show, see \cite{ZG04}, that a cycle of h-sets with covering relations must contain a periodic orbit. An h-set with cones is an h-set together with a quadratic form $Q$, that describes a uniform cone field on the h-set. The map is said to satisfy covering relations with cone conditions, if the quadratic form is increasing along orbits. Given recurrence, this yields uniqueness of periodic orbits. One can also use the cone conditions, see \cite{Z09}, to prove the existence of invariant manifolds and propagate them along orbits, which is how they are used in this section. The bounds on the location of the invariant manifolds given by covering relations with cone conditions are Lipschitz. In particular around a fixed point one gets a cone, which bounds the location of the invariant manifold. The Lipschitz constant depends on the ratio of the positive and negative eigenvalues of $Q$.

We construct an $h$-set with cones 
centered at $p_\mu$ as a cylinder of size $10^{-4}$ and $10^{-5}$ in the $(X,Y)$ and $Z$ directions, respectively, with a cone with Lipschitz constant $0.1$ defined by the quadratic form
$$
Q=\left[\begin{array}{ccc}
1 &0&0\\0&1&0\\0&0&-100
\end{array}
\right].
$$ 
We verify that covering relations and cone conditions hold for the time $6.3$ map. This proves that the unstable manifold exists within the $h$-set, and yields an enclosure of the unstable manifold as a Lipschitz graph with Lipschitz constant $0.1$ over the disc:
$$
\left\{(X,Y) \,: \|(X-x_\mu,Y-x_\mu^2)\|\leq 10^{-4}\right\}.
$$ 
To further contract the enclosure for a given value of $(X,Y)$, we partition the line segment over $(X,Y)$ in the cone, and integrate backwards for $100$ time units or until the trajectory passes the cone. Subsegments that leave the cone in backwards time are removed, and we use the interval hull of the remaining subsegments as our new bound of a point in the unstable manifold. The covering relations with cone conditions prove that each remaining subsegment over $(X,Y)$ contains a unique $Z$ value such that $(X,Y,Z)\in W^u_\mu$.

Given an initial enclosure of a point in $W_\mu^u$ we propagate it forwards by integrating \eqref{eq_resc_shnf} until it hits $\Sigma$ using VNODE-LP. To integrate the top and bottom of $B_0$, i.e., the boundaries of $B_0$ where $\mu$ is not constant, and the interior of $B_0$, we consider a $4$ dimensional phase space by appending $\dot \mu = 0$ to \eqref{eq_resc_shnf}. This procedure stabilizes the numerical behavior of the propagation of the unstable manifold.

\subsubsection{Verifying Assumptions \ref{A_Tangency}.(II-IV)} 
Using the method described in Section \ref{SSS_WU} one can now subdivide $\partial B_0$ into small subsets, compute an interval enclosure of each subset, and use validated numerical integration to show that Assumptions \ref{A_Tangency}.II, \ref{A_Tangency}.III, and \ref{A_Tangency}.IV are satisfied. In practice, this requires some experimentation: if the subsets are too large, wrapping effects in the numerical integration will make the verification of Assumptions \ref{A_Tangency}.II, \ref{A_Tangency}.III, and \ref{A_Tangency}.IV impossible. On the other hand, the computing time for the entire verification of Assumption \ref{A_Tangency}.II is approximately proportional to the  number of subsets to be integrated numerically. The bounds on $\Psi(\partial B_0)$ and $\Psi(M_i)$, for $i=1,4,$ and $5$, are given in Table \ref{T_PsiIm}.

\begin{table}[h]
\begin{center}
(a) \begin{tabular}{c|cccc}
& $\Psi(\partial B_0(\mu_0))$ & $\Psi(\partial B_0(\mu_1))$ & $\Psi(\partial B_0(Y_{min}))$ & $\Psi(\partial B_0(Y_{max}))$ \\ \hline
$X$ & $-1.5_{227}^{468}$ & $-1.^{6102}_{5156}$ & $-1.5_{236}^{462}$ & $-1.5_{107}^{368}$\\ \hline
$Z$ & $-0.16_{46}^{71}$ & $-0.16_{35}^{83}$ & $-0.16_{64}^{84} $ & $-0.16_{29}^{57}$
\end{tabular}

\vspace{0.3cm}

(b)\begin{tabular}{c|ccccc}
& $\psi(M_1)$ & $\psi(M_4)$ & $\psi(M_5)$\\ \hline
$X$ & $-1.52_{29}^{37}$ & $-1.535_{1}^{5}$ & $-1.57_{58}^{86}$ \\ \hline
$Z$ & $-0.164_0^1$ & $-0.167_7^8$ & $-0.166_1^3$
\end{tabular}

\vspace{0.3cm}

\caption{(a) The image of $\partial B_0$ under $\Psi$. (b) The image of the marked points on the $\partial B_0(\mu_1)$ line under $\Psi$. All images are in the interior of $R$. The computations in (a) and (b) prove Assumptions \ref{A_Tangency}.II and \ref{A_Tangency}.III, respectively.}\label{T_PsiIm}
\end{center}
\end{table}


{\bf Remark.}
Note that since the position of $S_\mu^r$ as well as the map to $\Sigma$ are computed using interval arithmetic, their computed positions have errors due to over estimation associated with them. These errors have to be taken into account when choosing the $Y$ value at which to place the half-plane $\Sigma$, the interval boundaries $\mu_0$ and $\mu_1$, and the functions $Y_{min}$ and $Y_{max}$. Generally,  placing $\Sigma$ at greater values of $Y$ results in tighter bounds for the slow manifold, and the repelling nature of the slow manifold spreads trajectories that were initially close in the fundamental domain far apart, making it easier to verify Assumptions \ref{A_Tangency}.(II-IV). We found the size $2\times 10^{-4}$ of the $h$-sets constructed in Section \ref{SSS_WU} to be large enough to keep the validated numerical integration to $\Sigma$ short enough to not accumulate prohibitively large errors, while being small enough to be efficiently computable.

{\bf Remark.} To give further insight into what happens after the bifurcation we note that the following set is forward invariant. For other values of the parameters, similar sets can be constructed. For $k>B$, 
$$
X<\frac{-\mu}{A+C}, \quad X^2>(1+k)Y, \quad Y>\frac{1+k}{k}, \quad |X|>|Z|. 
$$

We verify that the above conditions are satisfied, with $k=2$, for the point $M_5$. Thus, $X\rightarrow-\infty$ and $Y\rightarrow\infty$ for a part of the unstable manifold past the tangential bifurcation.




\section{Summary and Discussion}\label{S_Disc}

Computation of the slow manifolds in a normal form for singular Hopf bifurcation served as a case study for this paper. A singular Hopf bifurcation in slow-fast systems with two slow and one fast variable occurs when an equilibrium point crosses between attracting and repelling slow manifolds. The dynamics associated with this crossing -- a \emph{folded saddle-node type II} in the singular limit -- is complicated. The small amplitude oscillations emanating from the equilibrium point are part of \emph{mixed mode oscillations} in some examples, notably the model originally studied by Koper. Subsidiary bifurcations occur, including tangency between the repelling slow manifold and the two dimensional unstable manifold of the equilibrium point. Tangency bifurcations form part of the boundary of the parameter space region in which mixed mode oscillations occur in the Koper model, making them essential to understanding global aspects of the dynamics in this and other systems. Since there are no analytic methods for locating the tangency bifurcations, this paper uses verified computing methods to prove the existence of tangency bifurcations between a slow manifold and an unstable manifold of an equilibrium point for the first time.


Some of our ideas generalize to the case of slow manifolds of saddle type. To compute normally hyperbolic manifolds of saddle type, see e.g. \cite{CZ11}, one usually first computes the manifold's stable and unstable manifolds, and then intersects them. To compute a saddle slow manifold in a three dimensional ambient space using our ideas, one could compute enclosures of the stable and unstable manifolds, as presented in this paper. The existence argument given in Section \ref{S_Existence} can be modified to this setting, under appropriate assumptions on the dynamics on the slow manifold. Generalization to slow manifolds of saddle type in higher dimensional ambient spaces is substantially more challenging.

We made several design decisions while constructing our algorithm for computing slow manifolds. This section discusses details of several and motivates our choices. 
\begin{itemize}
 \item 
Our enclosures were constructed as pairs of enclosing transversal piecewise linear surfaces. There are several alternative approaches to how to construct and refine the vertices of the enclosing triangulated surfaces $\L$ and $\R$. For the examples in Sections \ref{S_NumRes} and \ref{S_Tang} we used rectangular patches in the domain of the slow variables. Instead, one could construct the triangulations of the original domain in the slow variables by considering a dynamically defined region, constructed by flowing a set of initial conditions on the critical manifold with the slow flow, and use a discretization of those trajectories as the vertices of the triangulation.
\item
We considered other possibilities for moving vertices in Section \ref{SS_ImpBound}; namely, to move them along trajectories of the flow of (\ref{eq_slowFast_12}), or to move them along the normal of the triangulation. Both of these methods have serious disadvantages. When moving vertices along the flow of the system, we have to carefully check whether the vertices are moved past edges, thereby destroying the integrity of the triangulation. If the triangulation remains a graph over the $(y,z)$ domain, it is possible to generate a new triangulation by a Delaunay-type algorithm, and lift it to the surface, but if two vertices flow to the same $(y,z)$ coordinate this is no longer possible. Additionally, this method of moving vertices moves the two enclosing surfaces by different amounts, so that we obtain an enclosure of a smaller part of the slow manifold. A third drawback is that the triangulations might develop very acute triangles. Finally, numerical integration of a large number of vertices is slow compared to the approach that we use. Moving vertices along the normals combines the worst of both methods: we no longer control the triangulations, and we might introduce violations of the transversality conditions.    
\item
The tightening procedure described in Section \ref{SS_ImpBound} only updates one vertex at the time, i.e., we move one vertex a big step and if all the faces attached to it are still transversal to the flow, then we move it. An alternative would be to move not only the vertex itself, but at the same time all vertices attached to it by an edge. Such a procedure would work as follows: when it is one vertex' ``turn'', only update it by a fraction of its potential improvement, and simultaneously move the ones it attaches to, by a smaller amount. The smaller neighbour updates should be such that the expected value of the total update of each vertex stays the same as in Section \ref{SS_ImpBound}. The benefit of such an approach is that the triangulation is not skewed as much in each step, so it should be easier to verify the transversality condition. In practice, however, the gain of this approach is negligible, compared to a slight increase of the denominator of (\ref{eq_update}). There are also disadvantages of such an approach, primarily in its computational complexity. Each time an update is made, one has to not only locate all its neighbouring vertices and update them, but also locate all of their neighbouring faces and check the transversality condition on them. In the results presented in Section \ref{S_NumRes}, we thus only update one vertex at the time.
\item
We construct invariant cone fields on $\C$ to prove that it contains normally hyperbolic locally invariant manifolds. We constructed these manifolds by flowing a ``ribbon'' around the inflowing boundaries of the enclosure. The property that our enclosures were aligned with the flow in the sense that for one of the slow variables the vector field is non-zero, was crucial for proving the existence of computable slow manifolds. In general one could also use the invariant cone fields to show that the graph transform is well defined, by adapting the method in \cite{KH95}. To prove the convergence of such a scheme would require very careful estimates of the expansion and contraction rates, and the norms of the nonlinear components of the vector field. An alternative is to define an extension of the vector field outside of $\C$ that has a slow manifold that is invariant rather than just locally invariant. Global invariance together with normal hyperbolicity would give a unique manifold for the extension using the technique from \cite{CZ11}. Given normal hyperbolicity, ensured by the existence of the cone field, either method would give the existence of a (non unique) $C^1$ normally hyperbolic manifold, which is the graph over the slow variables. Either of these approaches, however, include many subtle details that need to be clarified for the case at hand. 
\item
If the mesh size of piecewise linear enclosing surfaces remains fixed as $\eps$ decreases, then the curvature of the slow manifold becomes a a limiting factor in the tightness of enclosures. With smoother enclosing manifolds, tighter enclosures are likely to be possible. We did not attempt this because the  transversality calculations for piecewise linear systems were particularly simple in the singular Hopf normal form we studied.
\end{itemize}

\section{Acknowledgment}
T. J. was funded by a postdoctoral fellowship from \textit{Vetenskapsr\aa det} (the Swedish Research Council). J. G. and P. M. were partially supported by a grant from the National Science Foundation.



\end{document}